\numberwithin{equation}{section}
\newtheorem{theorem}{Theorem}[section]
\newtheorem{lemma}[theorem]{Lemma}
\newtheorem{proposition}[theorem]{Proposition}
\theoremstyle{definition}
\newtheorem{definition}[theorem]{Definition}
\newtheorem{example}[theorem]{Example}
\newcommand{\diam}{\mathop{\rm diam}\nolimits}
\newcommand{\dist}{\mathop{\rm dist}\nolimits}
\newcommand{\supp}{\mathop{\rm supp}\nolimits}
\def\esup{\mathop\mathrm{ess\,sup\,}}
\renewcommand{\d}{{\mathrm d}}
\newcommand{\N}{\mathbb{N}}
\newcommand{\R}{\mathbb{R}}
\newcommand{\sfd}{{\sf d}}
\newcommand{\prob}[1]{\mathscr P(#1)}
\newcommand{\eps}{\varepsilon}
\newcommand{\id}{{\rm{id}}}
\newcommand{\restr}[1]{\lower3pt\hbox{$|_{#1}$}}
\begin{document}

\title{%Characterization of the 
%Existence of 
$L^\infty$ estimates in optimal mass transportation}

\author{Heikki Jylh\"a}
\author{Tapio Rajala}
\address{University of Jyvaskyla, Department of Mathematics and Statistics,
P.O.Box 35 (MaD), FI-40014 University of Jyvaskyla, Finland
}
\email{heikki.j.jylha@jyu.fi}
\email{tapio.m.rajala@jyu.fi}
\thanks{This research was started while the first author was taking part in the Junior Hausdorff Trimester Program Optimal Transportation at the Hausdorff Research Institute for Mathematics in Bonn. The first author would like to thank the Institute for gracious support throughout the program. T.R. acknowledges the support of the Academy of Finland project no. 137528.}
\subjclass[2000]{Primary 49Q20. Secondary 49K30}
\keywords{}

\begin{abstract}
 We show that in any complete metric space the probability measures $\mu$ with compact and connected support
 are the ones having the property 
 that the optimal tranportation distance to any other probability measure $\nu$ living on the support of $\mu$
 is bounded below by a positive function of the $L^\infty$ transportation distance between $\mu$ and $\nu$.
 The function giving the lower bound depends only on the lower bound of the $\mu$-measures of balls centered at the
 support of $\mu$ and on the cost function used in the optimal transport. We obtain an essentially sharp
 form of this function.
 
 In the case of strictly convex cost functions we show that a similar estimate holds on the level of
 optimal transport plans if and only if the support of $\mu$ is compact and sufficiently close to being geodesic.
 
 We also study when convergence of compactly supported measures in $L^p$ transportation distance implies
 convergence in $L^\infty$ transportation distance. For measures with connected supports this property is
 characterized by uniform lower bounds on the measures of balls centered at the supports of the measures
 or, equivalently, by the Hausdorff-convergence of the supports.
%  In the case of measures with supports having
%  many connected components the characterization also involves the eventual constancy along the sequence of the total mass 
%  of converging well separated parts of measures.
\end{abstract}

\maketitle

%\tableofcontents

%\newpage

\section{Introduction}

Suppose we are given two Borel probability measures $\mu,\nu \in \prob{X}$ on a metric space $(X,\sfd)$
and a function $c \colon X \times X \to [-\infty,\infty]$ representing the cost of moving mass.
The optimal mass transportation problem, in the Kantorovich formulation, is then to minimize the quantity
\begin{equation}\label{eq:Kantorovich}
 \int_{X\times X} c(x,y)\,\d\lambda(x,y)
\end{equation}
over all possible transport plans
$\lambda \in \Pi(\mu,\nu)$, i.e. Borel probability measures in $X \times X$ having the marginals $\mu$ and $\nu$.
An optimal transport plan $\lambda$ minimizing \eqref{eq:Kantorovich} exists under mild regularity assumptions,
for example if the cost function $c$ is lower semicontinuous and bounded from below and if the metric space $(X,\sfd)$
is complete and separable \cite[Theorem 4.1]{Villani09}.
Under much more restrictive assumptions such minimizer is unique and given by an optimal transport map $T \colon X \to X$
as $\lambda = (\id, T)_\sharp\mu$.

Often the cost function in \eqref{eq:Kantorovich} is of the form $c(x,y)=h(\sfd(x,y))$
with some convex function $h \colon [0,\infty) \to [0,\infty)$.
The most commonly used cost functions are the $p$:th powers of the distance
with $p \in [1,\infty)$. This leads to the $L^p$ transportation distances $W_p$ defined
between $\mu,\nu \in \prob{X}$ by
\[
 W_p(\mu,\nu) = \inf_{\lambda \in \Pi(\mu,\nu)}\left(\int \sfd^p(x,y)\,\d\lambda(x,y)\right)^{1/p}.
\]
It is well known that the $W_p$ distance metrizes the topology of weak convergence (up to convergence of $p$:th moments).
The $W_p$ distances with $p \in (1,\infty)$ are often easier to handle, for instance due to strict convexity, than the limiting
cases $p = 1$ and $p = \infty$. In the latter one the distance is defined as
\[
 W_\infty(\mu,\nu) = \inf_{\lambda \in \Pi(\mu,\nu)} \lambda-\esup_{(x,y) \in X^2} \sfd(x,y). %\inf\{D > 0 \,:\,\sfd(x,y)\le D \text{ for }\lambda-a.e. (x,y)\}.
\]
The distance $W_\infty$ is even more cumbersome than $W_1$. This is because the problem of infimizing the cost 
\[
 \lambda-\esup_{(x,y) \in X^2} \sfd(x,y)
\]
over all $\lambda \in \Pi(\mu,\nu)$ is not
convex and thus it is not additive. Consequently, restrictions of optimal transports for $W_\infty$ are not necessarily
optimal. The problem of restrictions in $W_\infty$ was addressed by Champion, De Pascale and Juutinen in \cite{CDPJ2008} where they
introduced the notion of \emph{restrictable solutions}. Those are the optimal transports that retain optimality under restrictions.
Restrictable solutions appear as the limit solutions in the approximation as $p\to\infty$ and, more generally, can be characterized by a suitable version of cyclical monotonicity. The results of \cite{CDPJ2008}
were later generalized by the first author of this paper in \cite{J2015}. 

Despite the problematic features of the $W_\infty$ distance it is still used in many areas of mathematics.
The topology induced by $W_\infty$ is a natural one to work with to study local minimizers of certain functionals, such as the energy associated with the astrophysical fluid model considered by McCann \cite{M2006} and the interaction energy considered by Balagu\'e, Carrillo, Laurent and Raoul in \cite{BCLR2013a} and \cite{BCLR2013b}.
Recently, the $W_\infty$ distance has been used in quantum physics by Busch, Lahti and Werner in \cite{BLW2014},
and in the study of convergence of empirical measures by Garcia Trillos and Slepcev in \cite{GTS2014}.
It has also been used in heat flow estimates, see for instance the papers by
Kuwada \cite{K2010} and Savar\'e \cite{S2014}, as well as in BV-theory by Ambrosio and Di Marino \cite{ADM2014}.
\medskip

It is easy to see that we always have the inequality
\begin{equation}\label{eq:trivial}
 W_p(\mu,\nu) \le W_\infty(\mu,\nu)
\end{equation}
for all $p \ge1$. In general no inequality converse to \eqref{eq:trivial} holds. %the distances $W_p$ and $W_\infty$ differ from each other drastically.
In fact, one almost immediately notices that unlike $W_p$ for finite $p$, the distance 
$W_\infty$ no longer gives the weak topology if the space has more than one point.
In this paper we consider the questions when exactly does the convergence in $W_p$ imply convergence in $W_\infty$ 
and, in particular, when is it possible to get a reverse inequality to \eqref{eq:trivial}
in some uniform and quantitative form.

One answer to the latter question was given by Bouchitt\'e, Jimenez, and Rajesh in \cite{BJR2007}. 
They proved that for a bounded Lipschitz domain $\Omega \subset \R^d$,
for any $p>1$, and for any $\mu = f\mathcal{L}^d\restr{\Omega} \in \prob{\R^d}$ one has
\begin{equation}\label{eq:BJR}
 W_p^p(\mu,\nu)\ge \frac{C(p,d,\Omega)}{\|f^{-1}\|_{L^\infty}}W_\infty(\mu,\nu)^{p+d} \qquad\text{for all }\nu \in \prob{\overline{\Omega}}.
\end{equation}
Their result left open the question what happens in the limit case $p=1$ when $d>1$.
We show that \eqref{eq:BJR} also holds for $p=1$, as was conjectured in \cite{BJR2007}. This will be an immediate corollary of the following
general result that characterizes in metric spaces 
the measures $\mu$ for which there exists an estimate of the type \eqref{eq:BJR}.

{\begin{theorem}\label{thm:main}
 Let $(X,\sfd)$ be a complete metric space,
 $h \colon [0,\infty) \to [0,\infty)$ a nondecrasing function with $h(t)>0$ for all $t>0$
 and $\mu \in \prob{X}$.
 Then there exists a nondecreasing function $\omega \colon [0,\infty) \to [0,\infty)$ with $\omega(t)>0$ for all $t > 0$ such that
 \begin{equation}\label{eq:maineq}
  \inf_{\lambda \in \Pi(\mu,\nu)}\int h\circ \sfd\,\d\lambda \ge \omega(W_\infty(\mu,\nu)) \qquad \text{for all }\nu \in \prob{\supp\mu}
 \end{equation}
 if and only if $\supp\mu$ is compact and connected.

 Moreover, in such case one can take as $\omega$ in \eqref{eq:maineq} the function
 $\omega(t) = \tfrac12m({t}/{17})h({t}/{17})$, where 
 \[
  m(t) := \inf_{x \in \supp\mu}\mu(B(x,t)).
 \] 
\end{theorem}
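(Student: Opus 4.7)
The plan is to prove both implications separately, with the bulk of the work going into the quantitative "if" direction.

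For the "only if" direction I would argue by contrapositive. If $\supp\mu$ is not compact then, being complete, it fails to be totally bounded, so there is an $\eps$-separated sequence $\{x_n\}\subseteq\supp\mu$. Moving an arbitrarily small amount of $\mu$-mass $\alpha_n\to 0$ (obtained by shrinking balls $B(x_n,\eps_n)$ around non-atomic $x_n$, or by peeling a fraction from an atom) to one fixed point of $\supp\mu$ produces measures $\nu_n\in\prob{\supp\mu}$ with $\inf_\lambda\int h\circ\sfd\,\d\lambda\to 0$ but $W_\infty(\mu,\nu_n)$ bounded below by a positive constant, so no positive $\omega$ can satisfy \eqref{eq:maineq}. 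If instead $\supp\mu = A\sqcup B$ is a decomposition into disjoint nonempty closed pieces with $\dist(A,B)=\delta>0$, moving a vanishing mass from $A$ to a point $b\in B$ yields $\nu_n$ for which the marginal identities force $\lambda'(A\times B)\ge\alpha_n>0$ for every $\lambda'\in\Pi(\mu,\nu_n)$; hence $W_\infty(\mu,\nu_n)\ge\delta$ while the cost again tends to $0$.

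For the "if" direction I would first check that $m(s)>0$ for every $s>0$: otherwise a sequence $x_n\in\supp\mu$ with $\mu(B(x_n,s))\to 0$ has a limit $x_\infty\in\supp\mu$ by compactness, yielding $\mu(B(x_\infty,s/2))=0$ and contradicting $x_\infty\in\supp\mu$. Now fix $\nu\in\prob{\supp\mu}$, $\lambda\in\Pi(\mu,\nu)$, set $t:=W_\infty(\mu,\nu)>0$, and assume for contradiction that $\int h\circ\sfd\,\d\lambda<\tfrac12 m(s)h(s)$ for some $s<t/17$. Chebyshev's inequality gives $\lambda(\{\sfd\ge s\})<m(s)/2$, and for every closed set $A$ and every $r\ge s$ the identity $\nu(A^r)=\lambda(X\times A^r)$ combined with $\mu(A^{r-s})=\lambda(A^{r-s}\times X)$ yields
\[
\nu(A^r)\ge \mu(A^{r-s})-\lambda\bigl(A^{r-s}\times(X\setminus A^r)\bigr)\ge \mu(A^{r-s})-\tfrac12 m(s),
\]
because every $(x,y)\in A^{r-s}\times(X\setminus A^r)$ satisfies $\sfd(x,y)>s$.

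The decisive ingredient is chain-connectedness of $\supp\mu$: being compact and connected, at every scale $s>0$ any two of its points are joined by a chain in $\supp\mu$ with consecutive steps shorter than $s$ (the relation "joined by an $s$-chain" is open and equivalence-closed, hence clopen). If $A$ is closed with $\mu(A)>0$ and $\supp\mu\not\subseteq A^s$, a discrete intermediate-value argument along such a chain from $A\cap\supp\mu$ to a point outside $A^s$ produces $y\in\supp\mu$ with $\sfd(y,A)\in(s,2s]$; then $B(y,s)\subseteq A^{3s}\setminus A$, and by the definition of $m(s)$ we get $\mu(A^{3s}\setminus A)\ge m(s)$. Combining this with the elementary bound and taking $r$ a suitable multiple of $s$ still strictly less than $t$ gives $\mu(A)<\nu(A^r)$; the degenerate cases $A\cap\supp\mu=\emptyset$ and $\supp\mu\subseteq A^s$ are immediate from $\supp\nu\subseteq\supp\mu$. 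A Strassen-type characterisation of $W_\infty$ then forces $W_\infty(\mu,\nu)\le r<t$, contradicting $W_\infty(\mu,\nu)=t$. The main technical obstacle is aligning the slacks in the Chebyshev step, the chain/shell step, and the Strassen step so that they close into a single proportionality; the factor $1/17$ in the statement appears to be one convenient choice of constant that makes all three inequalities compose.
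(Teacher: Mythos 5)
Your ``if'' direction is a genuinely different argument from the paper's. The paper (Lemma \ref{lma:bigpartlongway}) explicitly builds a competitor coupling $\eta\in\Pi(\mu,\nu)$ by partitioning $\supp\mu$ into Voronoi cells around a $4r$-separated net and re-routing the long-range transports through chains of neighbouring cells; this constructive lemma is then reused to prove Theorem \ref{thm:convergence}. You instead invoke Strassen duality for $W_\infty$: from Chebyshev you get $\lambda(\{\sfd\ge s\})<m(s)/2$, then the chain/shell argument gives $\mu(A^{3s}\setminus A)\ge m(s)$ whenever $\mu(A)>0$ and $\supp\mu\not\subseteq A^s$, and combining these yields $\mu(A)\le\nu(A^{4s})$ for every closed $A$, so $W_\infty(\mu,\nu)\le 4s$. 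This is correct (and in fact would give the sharper constant $4$ in place of $17$), but it is nonconstructive and does not directly produce the coupling needed for the convergence theorem; the paper's rerouting lemma is doing double duty that the Strassen route does not.

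The ``only if'' direction, however, has a genuine gap in the non-compactness case. You propose to take an $\eps$-separated sequence $(x_n)\subseteq\supp\mu$, remove a small mass $\alpha_n=\mu(B(x_n,\eps_n))$ and relocate it to a single fixed point $x_0\in\supp\mu$, claiming the cost tends to $0$ while $W_\infty(\mu,\nu_n)$ stays bounded below. These two requirements are in tension. To keep $W_\infty(\mu,\nu_n)$ bounded below you need $\eps_n$ bounded away from $0$ (if $\eps_n\to 0$, a cascading/monotone coupling already achieves $W_\infty(\mu,\nu_n)\to 0$; e.g.\ $\mu$ with density $e^{-x}$ on $[0,\infty)$ and $E_n=[n,n+\delta_n]$ with $\delta_n\to 0$ gives $W_\infty(\mu,\nu_n)\lesssim\delta_n$). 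But with $\eps_n$ fixed, $\alpha_n=\mu(B(x_n,\eps_n))$ is no longer free, and the direct transport to the fixed point $x_0$ costs at least of order $\alpha_n h\big(\sfd(x_n,x_0)\big)$, which need not vanish when $\supp\mu$ is unbounded and $h$ grows fast (take $h(t)=e^{e^t}$ and the exponential example above). The paper avoids this by never transporting across a long distance: in the proof that $m(r)>0$ it moves the mass from $B(x_n,2r)$ to $\delta_{x_n}$ itself, and gets the $W_\infty\ge r$ lower bound from the net inflow into $B(x_n,r)$, at cost at most $th(2r)+\mu(B(x_n,r))h(r)$. Crucially this uses Lemma \ref{lma:komponentit} first (no separated components) to guarantee there is $\mu$-mass in the annulus $B(x_n,2r)\setminus B(x_n,r)$ to pull from. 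The same truncation issue appears in a milder form in your connectedness argument: the mass you move from $A$ to $b\in B$ should be taken from $A\cap B(x_0,R)$ for some finite $R$, exactly as the paper does, otherwise the cost bound $\alpha_n\sup_{a\in A}h(\sfd(a,b))$ may be infinite.
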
}

The function $\omega$ in Theorem \ref{thm:main} is essentially sharp in the sense that
it cannot be improved to a function larger than $\omega(t) = m(t)h(t)$, see Proposition \ref{prop:sharpness}.
In order to see that Theorem \ref{thm:main} implies \eqref{eq:BJR} notice that 
for a bounded Lipschitz domain $\Omega \subset \R^d$ and $\mu = f\mathcal{L}^d\restr{\Omega} \in \prob{\R^d}$
we have
\[
 m(t) \ge \frac{C(\Omega)}{\|f^{-1}\|_{L^\infty}}t^d \qquad\text{for all } 0 < t < \diam(\Omega).
\]
We also note that the condition $\nu\in\prob{\supp\mu}$ in \eqref{eq:maineq} is important: Take any $x\notin\supp\mu$ and $\nu_t=(1-t)\mu+t\delta_x$ for $t\in(0,1)$. Then it is easy to see that the right-hand side of \eqref{eq:maineq} is bounded from below by a positive constant, but the left side goes to zero as $t\to 0$. Thus \eqref{eq:maineq} cannot hold for all $\nu\in\prob{\supp\mu\cup\{x\}}$.

% A notable difference between our proof of Theorem \ref{thm:main} and the proof in \cite{BJR2007}
% is that in \cite{BJR2007} the optimal transport map giving $W_p$ is used to estimate $W_\infty$ from above. 
% In particular, in \cite{BJR2007} it is proven that if $\Omega$ is convex, then \eqref{eq:BJR} holds with
% $W_\infty$ replaced by the essential supremum of the transport distance in the optimal map.
% Our proof of Theorem \ref{thm:main} does not use optimal plans to estimate $W_\infty$,
% but instead the transport for estimating the $W_\infty$ distance will be a modification
% of the transport appearing on the left-hand side of \eqref{eq:maineq}. 

Our proof of Theorem \ref{thm:main} is quite different from the proof of \eqref{eq:BJR} in \cite{BJR2007}.
In \cite{BJR2007} it is proven that if $\Omega$ is convex, then \eqref{eq:BJR} holds with
$W_\infty$ replaced by the essential supremum of the transport distance in the optimal map.
This is then used to derive \eqref{eq:BJR}.
Instead in our proof of Theorem \ref{thm:main} the transport for estimating the $W_\infty$ distance will be a modification
of the transport appearing on the left-hand side of \eqref{eq:maineq}. 
The modification we use is intuitively quite obvious: the part that is transported long way with the original transport
will be redefined to be a combination of shorter distance transports. The rigorous modification is done in 
Lemma \ref{lma:bigpartlongway}.
It is clear that in the general case of
Theorem \ref{thm:main} such modification is necessary. Indeed, as was noted also in \cite{BJR2007},
for example in the class of optimal $W_1$ transports on the real line one cannot have
uniform $L^\infty$ estimates. This is due to the fact that non-monotone transports may also be optimal.
\medskip

As was mentioned above, in \cite{BJR2007} it was proven that for convex $\Omega$ one can get an $L^\infty$ estimate 
of the type \eqref{eq:BJR} for the optimal transport map.
In the context of our paper, the question is then: for strictly convex cost functions $h$
under what assumptions on $\supp\mu$ do we have an $L^\infty$ estimate of the type \eqref{eq:BJR} for optimal tranport plans?
It turns out that the existence of such estimate is characterized
by what could be called \emph{the strict $h\circ\sfd$-cyclical convexity of $\supp\mu$}, see condition \eqref{eq:eisykli}.
The idea behind the following Theorem \ref{thm:planestimate} is that the more convex $h$ is, the further from geodesic $\supp\mu$ is allowed to be.
As an easy example of this phenomenon,
consider the cost $\sfd^p(x,y)$ with $p>1$, as the metric space a snowflaked distance $\sfd(x,y) = |x-y|^{1/s}$
on the real line for some $s>1$, and as the support of the measure $\supp \mu = [0,1]$.
Then for $p>s$ there exists an $L^\infty$ estimate for optimal transport plans and below the critical case $p\le s$ there does not.
Of course this example is quite articifial, since it is equivalent to $L^{p/s}$ optimal transportation on the Euclidean real line.
However, it still shows how the convexity of $h$ affects the characterizing requirement on $\supp\mu$ for the existence of uniform 
$L^\infty$ estimates of optimal tranport plans.

%However, we are also able to get $L^\infty$ estimates on optimal transport plans, if we strengthen the connectedness assumption on $\mu$.

\begin{theorem}\label{thm:planestimate}
 Let $(X,\sfd)$ be a complete metric space,
 $h \colon [0,\infty) \to [0,\infty)$ a nondecreasing continuous function with $h(t)>0$ for all $t>0$
 and $\mu \in \prob{X}$. Then the following conditions are equivalent 
\begin{enumerate}
\item The set $\supp\mu$ is compact, and for every $x,y\in\supp\mu$, $x\neq y$, there exists $N\in\N$ and a sequence $(z_i)_{i=0}^{N+1}\subset\supp\mu$ such that $z_0=x$, $z_{N+1}=y$ and
\begin{equation}\label{eq:eisykli}
\sum_{i=0}^N h(\sfd(z_i,z_{i+1})) \;<\; h(\sfd(x,y)).
\end{equation}
\item There exists a nondecreasing function $\omega \colon [0,\infty) \to [0,\infty)$ with $\omega(t)>0$ for all $t > 0$ such that the following holds: If we take any $\nu \in \prob{\supp\mu}$ and a transport plan $\lambda\in\Pi(\mu,\nu)$ that minimizes the functional $C_h \colon \Pi(\mu,\nu)\to[0,\infty]$,
\[
C_h(\tilde{\lambda})= \int h\circ \sfd(x,y)\,\d\tilde{\lambda}(x,y), 
\]
then we have
 \begin{equation}\label{eq:planeq}
  \int h\circ \sfd(x,y)\,\d\lambda(x,y) \ge \omega(\lambda-\esup_{(x,y) \in X^2} \sfd(x,y)). 
 \end{equation}
\end{enumerate}
 Moreover, if (1) holds then one can take as $\omega$ in \eqref{eq:planeq} the function
 $\omega(t) = m({\rho(t)}/{4})h({\rho(t)}/{4})$, where $m$ is as in Theorem \ref{thm:main} and $\rho \colon [0,\infty) \to [0,\infty)$, $\rho(t)>0$ for all $t>0$, satisfies the following slightly stronger version of \eqref{eq:eisykli}: For every $x,y\in\supp\mu$, $x\neq y$, there exist $N\in\N$ and a sequence $(z_i)_{i=0}^{N+1}\subset\supp\mu$ such that $z_0=x$, $z_{N+1}=y$ and
\[
\sum_{i=0}^Nh\big(\sfd(z_i,z_{i+1})+\rho(\sfd(x,y))\big) \;<\; h(\sfd(x,y)).
\] 
\end{theorem}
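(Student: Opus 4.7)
The plan is to prove the two implications separately, with the explicit bound $\omega(t) = m(\rho(t)/4)h(\rho(t)/4)$ coming out of the (1)$\Rightarrow$(2) argument. For (2)$\Rightarrow$(1) I would argue by contrapositive. Compactness of $\supp\mu$ is forced by essentially the same construction as in the necessity part of Theorem~\ref{thm:main}. Supposing now $\supp\mu$ is compact but that the chain condition fails at some pair $x\neq y$ in $\supp\mu$, pick small $\delta>0$ and set $\nu_\eps := \mu + \eps(\mu_y-\mu_x)$, where $\mu_x,\mu_y$ are normalized restrictions of $\mu$ to $B(x,\delta)$ and $B(y,\delta)$. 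The plan that keeps the identity on the unchanged mass and transports the $\eps$-mass directly from $B(x,\delta)$ to $B(y,\delta)$ has $C_h$-cost $\approx \eps h(\sfd(x,y))$, while any chain plan routing the $\eps$-mass through intermediate points $z_i\in\supp\mu$ has cost $\approx \eps\sum h(\sfd(z_i,z_{i+1})) \ge \eps h(\sfd(x,y))$; hence the direct plan is $C_h$-optimal. Its essential supremum is $\approx\sfd(x,y)>0$ while its cost is $O(\eps)$, which violates (2) as $\eps\downarrow 0$.

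For (1)$\Rightarrow$(2), fix $\nu\in\prob{\supp\mu}$ and a $C_h$-optimal plan $\lambda$. Since $\supp\mu$ is compact, so is $\supp\lambda$, and by continuity of $\sfd$ I choose $(x^*,y^*)\in\supp\lambda$ attaining $D := \max_{\supp\lambda}\sfd$, which equals the $\lambda$-essential supremum of $\sfd$ in (2). Assume $D>0$; the strengthened form of (1) supplies a chain $z_0=x^*,\ldots,z_{N+1}=y^*$ in $\supp\mu$ with $\sum_{i=0}^N h(\sfd(z_i,z_{i+1})+\rho(D)) < h(D)$. Set $r := \rho(D)/4$, so that $m(r)>0$ by compactness. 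The crux is a dichotomy: either (A) for every $i=1,\ldots,N$ the set $\supp\lambda\cap(B(z_i,r)\times\overline{B(z_i,2r)})$ is nonempty, or (B) some $i_0\in\{1,\ldots,N\}$ has this intersection empty. In case (A), pick $(x_i,y_i)$ from each such intersection and set $(x_0,y_0) = (x^*,y^*)$. Triangle inequality estimates give $\sfd(x_i,y_{\sigma(i)}) \le \sfd(z_i,z_{i+1}) + 3r \le \sfd(z_i,z_{i+1}) + \rho(D)$ for the cyclic shift $\sigma(i)=(i+1)\bmod(N+1)$, and the $c$-cyclical monotonicity of $\supp\lambda$ then yields
\[
h(D) \;\le\; \sum_{i=0}^N h(\sfd(x_i,y_i)) \;\le\; \sum_{i=0}^N h(\sfd(x_i,y_{\sigma(i)})) \;\le\; \sum_{i=0}^N h(\sfd(z_i,z_{i+1})+\rho(D)) \;<\; h(D),
\]
a contradiction. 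Hence (B) holds, so $\lambda(B(z_{i_0},r)\times\overline{B(z_{i_0},2r)})=0$; the $\mu$-mass of $B(z_{i_0},r)$, which is at least $m(r)$, is then entirely transported to $X\setminus\overline{B(z_{i_0},2r)}$, i.e., over distances strictly greater than $r$. Therefore $\int h\circ\sfd\,d\lambda \ge m(r)h(r) = m(\rho(D)/4)h(\rho(D)/4)$, which is the claimed $\omega(D)$.

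The main obstacle is upgrading condition (1) from its pointwise form to the uniform $\rho$-form used above: combining the strict chain inequality with compactness of $\supp\mu$ and continuity of $h$ to produce a function $\rho$ with $\rho(t)>0$ for every $t>0$ and depending only on $\sfd(x,y)$. Once $\rho$ is in hand, the dichotomy and cyclical-monotonicity computation above go through cleanly.
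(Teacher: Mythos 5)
Your proof of the implication (1)\,$\Rightarrow$\,(2) is essentially correct, modulo the construction of the uniform modulus $\rho$ which you explicitly flag as the remaining obstacle (this is the paper's Lemma~\ref{lma:rhoexists}, a compactness-plus-uniform-continuity argument that is routine but must be spelled out). Your dichotomy is a small but pleasant repackaging of what the paper does: the paper assumes for contradiction that $\lambda\big(\{\sfd\ge\rho(D)/4\}\big)<m(\rho(D)/4)$, deduces that each $U_i:=B(z_i,\rho(D)/2)^2$ meets $\supp\lambda$, and then derives a cyclical-monotonicity violation; your case (A) is exactly that violation, and your case (B) extracts the lower bound $m(r)h(r)$ directly from the fact that all the mass of $B(z_{i_0},r)$ must be transported past distance $r$, instead of first proving the intermediate inequality~\eqref{eq:larvio}. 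Both routes give the same constant.

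The implication (2)\,$\Rightarrow$\,(1), however, has a genuine gap. The crux of your argument is the claim that the ``direct'' plan $(\id,\id)_\sharp(\mu-\eps\mu_x)+\eps\,\mu_x\times\mu_y$ is $C_h$-optimal for the pair $(\mu,\nu_\eps)$. To justify this you compare it with ``chain plans'' and invoke the failure of~\eqref{eq:eisykli} at the pair $(x,y)$ to bound the chain cost below by $\eps h(\sfd(x,y))$. But the relevant comparison for cyclical monotonicity is not between chains from $x$ to $y$: a typical support pair of the direct plan is $(x',y')\in B(x,\delta)\times B(y,\delta)$ with $x'\ne x$, $y'\ne y$, and what is needed is that \emph{no chain through $\supp\mu$ undercuts $h(\sfd(x',y'))$}. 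Condition~\eqref{eq:eisykli} failing at $(x,y)$ says nothing about $(x',y')$, and in fact one should expect chains to exist for such nearby pairs: Lemma~\ref{lma:almost1} shows that for each fixed $x$, a chain exists for $\mu$-a.e.\ $y$, so the ``bad'' set is $\mu$-null. The paper's critical Example~\ref{ex:critical} makes this concrete: there~\eqref{eq:eisykli} fails only at the single pair $(0,2)$, while it holds for every other pair, so the direct plan from $B(0,\delta)$ to $B(2,\delta)$ violates $h\circ\sfd$-cyclical monotonicity as soon as $\delta>0$ and is therefore not optimal. Once the direct plan is not optimal, the actual $C_h$-optimal plan could route the mass through short steps with both small cost and small essential supremum, and no contradiction with (2) is obtained. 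The paper's proof avoids this by choosing a very different perturbation, $\nu_k=\mu\restr{F_k}+\mu(E_k)\delta_y$ built from the superlevel sets $F_k$ of the chain-cost functional $f_{x,y}$, showing $\mu(E)=0$ via Lemma~\ref{lma:almost1}, and then establishing that \emph{every} optimal plan for $(\mu,\nu_k)$ must contain a support pair at distance $\ge\sfd(x,y)/2$ through a mass-balance and backward-tracing argument in the sets $A_j$. That argument, or something playing the same role, is needed; the direct-plan-is-optimal shortcut does not hold.
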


% As will be shown in Example \ref{ex:critical} the condition (3) in Theorem \ref{thm:planestimate} does not always imply (2)
% even in the case of measures $\mu$ that are limits of measures for which (1) holds. In particular this shows that the function $\omega$
% in \eqref{eq:planeq} does not depend only on the functions $m$ and $h$.
% We do not know if (1) is equivalent with (2).

%or even if there is a clean way of writing a condition similar to (1) that would be equivalent to (2).
The condition (1) of Theorem \ref{thm:planestimate} is clearly satisfied by any strictly convex cost function $h$ if
$\supp\mu$ is geodesic. Thus Theorem \ref{thm:planestimate} can be seen as a generalization of the corresponding result in \cite{BJR2007}.
The condition (1) is also satisfied if we have $h(0) = h'(0)=0$ and if any two points in $\supp \mu$ can be connected by a rectifiable curve
in $\supp \mu$. This is a special case of the more general result presented in Proposition \ref{prop:nicelyconnectedsupport}.
\medskip

The modification of transports needed in the proof of Theorem \ref{thm:main} that is done in Lemma \ref{lma:bigpartlongway}
also gives the following result on convergence in different topologies.
By $\sfd_H$ we denote the Hausdorff-distance between closed sets in $(X,\sfd)$
defined as
\[
 \sfd_H(A,B) := \max\big(\sup_{x \in A}\dist(x,B),\sup_{y \in B}\dist(y,A)\big).
\]

\begin{theorem}\label{thm:convergence}
 Suppose $(\mu_i)_{i=1}^\infty$ is a sequence of compactly supported 
 probability measures in a complete metric space $(X,\sfd)$ and let $p \ge 1$.
 Then $W_\infty(\mu_i,\mu) \to 0$ if and only if the following conditions hold
 \begin{enumerate}
  \item $W_p(\mu_i,\mu) \to 0$.
  \item $\sfd_{H}(\supp \mu_i,\supp \mu) \to 0$ as $i \to \infty$.
  \item If there exist sequences of positive measures $(\mu_i^1)_{i=1}^\infty$ and $(\mu_i^2)_{i=1}^\infty$ such that
        $\mu_i = \mu_i^1 + \mu_i^2$ for all $i$, $W_p(\mu_i^1,\mu^1) \to 0$ and $W_p(\mu_i^2,\mu^2) \to 0$
        for some measures $\mu^1$ and $\mu^2$, and if $\inf_{i\in\N}\dist(\supp\mu_i^1,\supp\mu_i^2)>0$,
        then there exists $i_0 \in \N$ such that $\mu_i^1(X) = \mu^1(X)$ for all $i \ge i_0$.
 \end{enumerate}
\end{theorem}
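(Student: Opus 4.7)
\emph{Forward direction.} Property (1) is immediate from $W_p \le W_\infty$. For (2), any $W_\infty$-optimal plan $\lambda_i \in \Pi(\mu_i, \mu)$ pairs each $x \in \supp\mu_i$ with a point of $\supp\mu$ at distance $\le W_\infty(\mu_i, \mu)$ (since $\lambda_i(B(x, \rho) \times X) > 0$ for every $\rho > 0$ and $\lambda_i$ is concentrated on $\{\sfd \le W_\infty(\mu_i, \mu)\}$), and a symmetric argument yields $\sfd_H(\supp\mu_i, \supp\mu) \le W_\infty(\mu_i, \mu) \to 0$. For (3), set $\delta := \inf_i \dist(\supp\mu_i^1, \supp\mu_i^2) > 0$. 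Once $W_\infty(\mu_i, \mu) < \delta/3$, $\lambda_i$ is concentrated on $\{\sfd \le \delta/3\}$ and splits $\mu$ as $\mu = \nu_i^1 + \nu_i^2$ via $\nu_i^j := (\pi_2)_\sharp(\lambda_i|_{\supp\mu_i^j \times X})$, each supported in the disjoint $(\delta/3)$-neighborhood $U_i^j$ of $\supp\mu_i^j$. The $W_p$-convergences $\mu_i^j \to \mu^j$ imply $\supp\mu^j \subseteq U_i^j$ for large $i$ (by weak convergence and compactness of $\supp\mu^j$), and the disjointness of the $U_i^j$'s then forces $\supp\mu^{j'} \cap U_i^j = \emptyset$ for $j' \ne j$; hence $\nu_i^j = \mu|_{U_i^j} = \mu^j$ and $\mu_i^j(X) = \nu_i^j(X) = \mu^j(X)$.

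\emph{Backward direction: reduction.} Assume (1), (2), (3), fix $\eta > 0$, and aim for $W_\infty(\mu_i, \mu) < \eta$ eventually. Write $\eps_i := \sfd_H(\supp\mu_i, \supp\mu) \to 0$ from (2) and pick a Borel map $T_i \colon \supp\mu_i \to \supp\mu$ with $\sfd(x, T_i(x)) \le \eps_i$. The pushforward $\tilde\mu_i := (T_i)_\sharp \mu_i \in \prob{\supp\mu}$ satisfies $W_\infty(\mu_i, \tilde\mu_i) \le \eps_i \to 0$, so it suffices to bound $W_\infty(\tilde\mu_i, \mu)$. The sequence $\tilde\mu_i$ still satisfies (3): any $\delta_0$-separated decomposition of $\tilde\mu_i$ pulls back through $T_i$ to a $(\delta_0 - 2\eps_i)$-separated decomposition of $\mu_i$ with identical masses and identical $W_p$-limits, so (3) applied to the latter yields (3) for $\tilde\mu_i$.

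\emph{Cluster-wise mass conservation.} Fix $\delta \in (0, \eta/4)$ and partition the compact set $\supp\mu$ into its finitely many $\delta$-clusters $K^1, \dots, K^m$ (equivalence classes of the relation generated by $\sfd \le \delta$). An optimal $W_p$-plan $\lambda_i$ between $\tilde\mu_i$ and $\mu$ places at most $W_p^p(\tilde\mu_i, \mu)/(\delta/2)^p \to 0$ mass on pairs with $\sfd \ge \delta/2$, so the restriction $\lambda_i|_{K^j \times K^j}$ has vanishing $W_p$-cost and marginals differing from $\tilde\mu_i|_{K^j}$ and $\mu|_{K^j}$ by vanishing mass. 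In particular $W_p(\tilde\mu_i|_{K^j}, \mu|_{K^j}) \to 0$, and condition (3) applied to the $\delta$-separated decomposition $\tilde\mu_i = \tilde\mu_i|_{K^j} + \tilde\mu_i|_{\bigcup_{k \ne j}K^k}$ yields $\tilde\mu_i(K^j) = \mu(K^j)$ for all large $i$. A $W_\infty$-plan between $\tilde\mu_i$ and $\mu$ can therefore be assembled cluster by cluster.

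\emph{Cluster-wise modification.} Within a fixed $K^j$, start from an optimal $W_p$-plan between $\tilde\mu_i|_{K^j}$ and $\mu|_{K^j}$ and apply Lemma \ref{lma:bigpartlongway}: hops of length $\ge \eta/2$ carry vanishing total mass (by the same Chebyshev bound) and are rerouted along $\delta$-chains of points in $\supp\mu \cap K^j$ (available because $K^j$ is $\delta$-chain connected by definition), turning them into sums of hops of length $\le \delta$. The main obstacle is making this rerouting quantitative: each chain step must absorb a small parcel of rerouted mass, for which one needs a uniform lower bound, analogous to $m$ in Theorem \ref{thm:main}, on the $\mu$-mass of small balls centered along the chain. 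Compactness of $K^j$ together with the positivity of $\mu(B(z, r))$ for every $z \in \supp\mu$ and $r > 0$ supplies this bound, and choosing $\delta$ small enough in terms of it produces a cluster-wise plan of $L^\infty$-cost $\le 3\eta/4$. Summing over clusters completes the proof.
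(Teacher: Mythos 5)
Your argument is correct and follows essentially the same route as the paper: the forward direction uses $W_p\le W_\infty$, the inequality $\sfd_H\le W_\infty$, and the separation of supports; the backward direction decomposes $\supp\mu$ into finitely many $\delta$-connected pieces, invokes condition (3) to match masses piece by piece, and applies Lemma~\ref{lma:bigpartlongway} to each piece. The one organizational difference is that you first push $\mu_i$ onto $\supp\mu$ via $T_i$ and then run the lemma with $\varepsilon=0$, whereas the paper keeps $\mu_i$ as is and applies the lemma with $\varepsilon=\delta=s$; both are fine, and your treatment of condition (3) in the forward direction is in fact a bit more explicit than the paper's. One small inaccuracy worth fixing: Lemma~\ref{lma:bigpartlongway} does not turn long hops into ``sums of hops of length $\le\delta$''---the rerouted hops have length up to $16r+4\varepsilon+\delta$ (each goes from one $U_{k_l}$ to an adjacent $U_{k_{l+1}}$), and it is the resulting bound $17r+4\varepsilon+\delta$ on the $L^\infty$ cost, not a $\delta$-scale decomposition, that you should be quoting.
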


A few comments on the formulation of Theorem \ref{thm:convergence} are in order.
First of all, due to compactness the condition (2) in Theorem \ref{thm:convergence} can be replaced by the 
requirement of uniform lower bounds for measures of balls:
\begin{itemize}
 \item[(2')] $\inf_{i \in \N}\inf_{x \in \supp \mu_i}\mu_i(B(x,r)) > 0$ for all $r>0$.
\end{itemize}
See Lemma \ref{lma:condition2equiv} for the proof of this.
Secondly, the condition (3) in Theorem \ref{thm:convergence}
is needed to handle the case where $\supp \mu$ has many connected components. 
The idea of (3) is that the measure of a component has to stabilize to a constant in the convergence.
The condition (3) has to be stated in terms of separated parts of supports, since it could well be
that every connected component of the support has zero measure. Consider for example the case
with $\supp \mu$ a Cantor set and $\mu$ the corresponding Hausdorff measure restricted to this set.
If we assume $\supp \mu$ to be connected or each $\supp\mu_i$ to be connected,
the condition (3) can be dropped.

Theorem \ref{thm:convergence} does not hold if we drop the compactness assumption on $\supp \mu_i$.
For example, we can have $\supp \mu_i = \R$ for all $i$, $\supp\mu = \R$, $W_p(\mu_i,\mu) \to 0$
as $i \to \infty$ for all $1 < p < \infty$, but $W_\infty(\mu_i,\mu) = \infty$ for all $i$.
See Example \ref{ex:noncompact}.

The rest of this paper is organized as follows. In Section 2 we study lower bounds for transport cost in terms of the $L^\infty$ transportation distance. The goal of the section is to prove Theorems \ref{thm:main} and \ref{thm:convergence}. In Section 3 we consider lower bounds for transport cost in terms of the maximal transportation distance in the optimal transport plan. As our main result in this section we prove Theorem \ref{thm:planestimate}.

\section{Comparison of infima}

In this section we prove Theorems \ref{thm:main} and \ref{thm:convergence}. A key ingredient in both proofs
is Lemma \ref{lma:bigpartlongway}. Before stating and proving it we start with an easier lemma.

\begin{lemma}\label{lma:mexists}
 Let $(X,\sfd)$ be a complete metric space and $\mu \in \prob{X}$. Then $\supp\mu$ is compact if and only if
 \[
  m(t) = \inf_{x \in \supp\mu}\mu(B(x,t)) > 0 \qquad \text{for all }t > 0.
 \]
\end{lemma}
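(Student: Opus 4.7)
The plan is to prove both implications by fairly standard arguments once the right covering/packing setup is in place.

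For the forward direction (compactness implies $m(t) > 0$), I would fix $t > 0$ and exploit the definition of support: every point $x \in \supp \mu$ has $\mu(B(x, t/2)) > 0$. By compactness of $\supp \mu$, I can cover it with finitely many balls $B(x_1, t/2), \dots, B(x_n, t/2)$ centered at points of $\supp \mu$, and set $\alpha = \min_{i} \mu(B(x_i, t/2)) > 0$. For any $x \in \supp \mu$, pick an $i$ with $\sfd(x, x_i) < t/2$; then $B(x_i, t/2) \subset B(x, t)$, so $\mu(B(x,t)) \ge \alpha$. Taking the infimum over $x$ yields $m(t) \ge \alpha > 0$.

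For the reverse direction, I would use the fact that $\supp \mu$ is automatically closed, so in the complete metric space $(X,\sfd)$ it suffices to show that $\supp \mu$ is totally bounded. I would argue by contradiction: if $\supp \mu$ fails to be totally bounded, there is some $t > 0$ and an infinite sequence $(x_n) \subset \supp \mu$ which is $t$-separated, i.e.\ $\sfd(x_n, x_m) \ge t$ for all $n \ne m$. Then the open balls $B(x_n, t/2)$ are pairwise disjoint, and each has $\mu$-measure at least $m(t/2) > 0$ by hypothesis, so
\[
1 = \mu(X) \;\ge\; \sum_{n=1}^{\infty} \mu\bigl(B(x_n, t/2)\bigr) \;\ge\; \sum_{n=1}^{\infty} m(t/2) \;=\; +\infty,
\]
which is absurd. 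Hence $\supp \mu$ is totally bounded and therefore compact.

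Neither direction looks like a real obstacle; the only minor subtlety is making sure the chosen centers in the forward direction lie in $\supp \mu$ (so that the hypothesis $\mu(B(x_i, t/2)) > 0$ applies), which is handled by choosing the covering from within $\supp \mu$ itself using compactness. The rest is just the standard completeness-plus-totally-bounded characterization of compactness combined with an elementary packing argument.
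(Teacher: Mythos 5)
Your proof is correct in both directions, but the forward direction takes a genuinely different route from the paper. For the reverse direction (reducing to total boundedness and using a packing argument) your argument is essentially the same as the paper's: they take a maximal family of disjoint balls $B(x,r/2)$ centered in $\supp\mu$, note it has at most $1/m(r/2)$ members, and double the radii to get a finite cover; you phrase it contrapositively via an infinite $t$-separated sequence and a divergent sum, but it is the same packing idea. For the forward direction your argument is different and arguably more elementary: you extract a finite subcover of $\supp\mu$ by balls $B(x_i,t/2)$ with $x_i\in\supp\mu$, take $\alpha=\min_i\mu(B(x_i,t/2))>0$, and observe that any $x\in\supp\mu$ lies within $t/2$ of some $x_i$, so $B(x_i,t/2)\subset B(x,t)$ and $m(t)\ge\alpha$. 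The paper instead argues by contradiction via lower semicontinuity of $x\mapsto\mu(B(x,t))$: a minimizing sequence $x_i\in\supp\mu$ with $\mu(B(x_i,t))\to 0$ has a convergent subsequence $x_i\to x\in\supp\mu$, and from $B(x,t)=\bigcup_i B(x_i,t-\sfd(x_i,x))$ one concludes $\mu(B(x,t))\le\liminf_i\mu(B(x_i,t))=0$, contradicting $x\in\supp\mu$. Your version avoids the semicontinuity step and produces the lower bound $\alpha$ constructively from the finite subcover; the paper's phrasing isolates the lower semicontinuity of $x\mapsto\mu(B(x,t))$ as a reusable fact, which it appeals to again in the proof of Proposition \ref{prop:sharpness} to extract a point where the infimum defining $m(r)$ is attained.
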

\begin{proof}
Let us first assume compactness of $\supp\mu$. Then the claim follows from the lower semicontinuity of the function $x \mapsto \mu(B(x,t))$.
 Let us still provide a short proof for the convenience of the reader. 
 
 Let $t>0$.
 If $m(t) = 0$ would hold, then there would exist a sequence $(x_i)_{i=1}^\infty$ in $\supp\mu$
 such that $\mu(B(x_i,t)) \to 0$. By compactness of $\supp\mu$ we may assume that $x_i \to x \in \supp\mu$.
 Since 
 \[
  B(x,t) = \bigcup_{i=1}^\infty B(x_i,t-\sfd(x_i,x)),
 \]
 we have
 \[
  \mu(B(x,t)) \le \liminf_{i \to \infty} \mu(B(x_i,t)) = 0.
 \]
 This contradicts the fact that, by the definition of the support, $\mu(B(x,t))>0$. 

Now let us show the converse direction and assume $m(t)>0$ for all $t>0$. Then we can prove that $\supp\mu$ is totally bounded:
Given $r>0$ we can choose a maximal collection of disjoint balls $B(x,r/2)$, $x\in\supp\mu$.
This collection is finite, since it contains at most $1/m(r/2)$ balls.
Doubling the radius of the balls in this collection gives a finite cover of $\supp\mu$ using balls of radius $r$.
\end{proof}

Next we prove the key lemma of the paper.
For $\delta > 0$, by a $\delta$-connected set $A \subset X$ we mean that for all $x,y \in A$ there exists a sequence $(x_i)_{i=1}^N$ in $A$
such that $x_i=x$, $x_N = y$ and $\sfd(x_i,x_{i+1}) \le \delta$ for all $i$.
by $0$-connected set we simply mean a connected set.
For proving Theorem \ref{thm:main}, we will use Lemma \ref{lma:bigpartlongway} with $\delta=\eps=0$. Positive
$\delta$ and $\eps$ will appear later in the proof of Theorem \ref{thm:convergence}.

\begin{lemma}\label{lma:bigpartlongway}
 Let $(X,\sfd)$ be a complete metric space,
 let $\mu \in \prob{X}$ with $\delta$-connected support for some $\delta \ge 0$, and suppose that
 \[
  m(t) := \inf_{x \in \supp\mu}\mu(B(x,t))> 0 \qquad\text{for all }t>0.
 \] 
 Furthermore, let $\varepsilon\ge0$ and $\nu \in \prob{B(\supp\mu,\varepsilon)}$ be such that there exists $\lambda \in \Pi(\mu,\nu)$ with the property that
 \begin{equation}\label{eq:assumption}
    \lambda\big(\{(x,y) \in X \times X\,:\,\sfd(x,y) \ge r\}\big) < \frac{m(r)}{2}
 \end{equation}
 for some $r > 0$. Then $W_\infty(\mu,\nu)\le 17r + 4\varepsilon + \delta$.
\end{lemma}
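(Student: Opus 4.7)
The plan is to invoke Strassen's theorem to reduce the $W_\infty$ estimate to a set-theoretic inequality: $W_\infty(\mu,\nu) \le R$ holds if and only if $\mu(A) \le \nu(A_R)$ for every closed $A \subseteq X$, where $A_R := \{y : \dist(y,A) \le R\}$. Setting $R := 17r + 4\varepsilon + \delta$ and noting that $\mu(A) = \mu(A \cap \Sigma)$ where $\Sigma := \supp\mu$, it suffices to verify this inequality for closed non-empty $A \subseteq \Sigma$.

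Two preliminary estimates fall out of the hypothesis $\lambda(\{\sfd \ge r\}) < m(r)/2$. First, the good part $\lambda|_{\{\sfd < r\}}$ carries at least $\mu(B) - m(r)/2$ of $B$ into $B_r$, yielding the approximate matching $\nu(B_r) \ge \mu(B) - m(r)/2$ for every Borel $B$. Applying this to $B = B(x,r)$ for $x \in \Sigma$, using $\mu(B(x,r)) \ge m(r)$, gives the local ball estimate $\nu(B(x,r)_r) \ge m(r)/2$. With these in hand, I split the verification into three cases according to $\mu(A)$ and $d_A := \sup_{z \in \Sigma}\dist(z,A)$. If $\mu(A) \le m(r)/2$, then for any $x \in A$, $B(x,r)_r \subseteq A_{2r} \subseteq A_R$, and the ball estimate gives $\nu(A_R) \ge m(r)/2 \ge \mu(A)$. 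If instead $d_A + \varepsilon \le R$, then $\supp\nu \subseteq B(\Sigma,\varepsilon) \subseteq A_{d_A+\varepsilon} \subseteq A_R$, so $\nu(A_R) = 1 \ge \mu(A)$.

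In the remaining main case $\mu(A) > m(r)/2$ and $d_A > R - \varepsilon$, the $\delta$-connectedness of $\Sigma$ enters crucially. Along any $\delta$-chain in $\Sigma$ from a point of $A$ to a point realizing $d_A$, the $1$-Lipschitz function $\dist(\cdot,A)$ changes by at most $\delta$ per step, so an intermediate-value style argument produces $x_0 \in \Sigma$ with $\dist(x_0,A) \in (3r,\,3r+\delta]$. For this $x_0$, the neighbourhood $B(x_0,r)_r$ is disjoint from $A_r$ and contained in $A_R$ (using $R \ge 5r + \delta$, which is comfortably ensured by the stated constant). Combining with the approximate matching,
\[
 \nu(A_R) \;\ge\; \nu(A_r) + \nu(B(x_0,r)_r) \;\ge\; \bigl(\mu(A) - m(r)/2\bigr) + m(r)/2 \;=\; \mu(A).
\]
The main obstacle is precisely this last case: arranging the $\delta$-chain to land the value of $\dist(\cdot,A)$ in the window $(3r,\,R-2r]$ despite the constraints imposed by $\delta$ and $\varepsilon$, and then tracking all three case thresholds ($R \ge 2r$, $R \ge d_A + \varepsilon$, $R \ge 5r + \delta$) against the single stated constant $17r + 4\varepsilon + \delta$. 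The generous coefficients $17$ and $4$ are there to absorb the slack required to cover these thresholds uniformly in all admissible regimes of the parameters.
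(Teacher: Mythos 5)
Your approach is correct but genuinely different from the paper's. The paper constructs an explicit coupling $\eta\in\Pi(\mu,\nu)$: it chops $B(\supp\mu,\varepsilon)$ into Voronoi-type cells $U_i$ around a maximal $4r$-separated net in $\supp\mu$, keeps the ``short'' part of $\lambda$ as a cell-to-cell product coupling, and reroutes the ``long'' part along $\delta$-chains of cells whose bookkeeping is designed so that the marginals still come out to $\mu$ and $\nu$; the $17r+4\varepsilon+\delta$ bound is then read off from $\diam U_i\le 8r+2\varepsilon$. You instead go through Strassen's theorem, reducing $W_\infty(\mu,\nu)\le R$ to the marriage-type inequality $\mu(A)\le\nu(A_R)$ for closed $A\subseteq\supp\mu$, and verify it by a three-case analysis using only the two elementary consequences $\nu(B_r)>\mu(B)-m(r)/2$ and $\nu(B(x,r)_r)>m(r)/2$ of the hypothesis. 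Both arguments are valid. Yours is shorter and, notably, the thresholds you track ($R\ge 2r$, $R\ge 5r+\delta$ for the disjointness/containment window, and the case-2 boundary) are all comfortably below $17r+4\varepsilon+\delta$; in fact your argument would close with something like $R=6r+\varepsilon+\delta$, so it yields a sharper constant than the paper's. The trade-offs: your route invokes Strassen (hence implicitly Polish/tightness-type regularity, which the paper's constructive argument avoids), and the paper reuses its explicit construction verbatim inside Theorem~\ref{thm:convergence}, so the constructiveness pays off structurally.

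One small gap to fix: you take $x_0$ with $\dist(x_0,A)\in(3r,3r+\delta]$, but this interval is empty when $\delta=0$. In that case $\supp\mu$ is connected, $\dist(\cdot,A)$ is continuous on it, and its range is an interval from $0$ past $3r$, so you should simply choose $x_0$ with, say, $\dist(x_0,A)=4r$; the disjointness ($>3r$ strictly) and containment ($\le R-2r$) checks then go through unchanged. For $\delta>0$ your chain argument as stated is fine: the first chain point past $3r$ lands in $(3r,3r+\delta]$ because $\dist(\cdot,A)$ is $1$-Lipschitz and consecutive points are at distance $\le\delta$. With that adjustment the proposal is complete.
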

\begin{proof}
 Let $\{x_i\}_{i=1}^N$ be a maximal $4r$-separated net of points in $\supp \mu$. 
 The fact that $N < \infty$ follows from the assumption $m(r) > 0$.
 With the points $\{x_i\}_{i=1}^N$ we define
 a Borel partition $\{U_i\}_{i = 1}^N$ of $B(\supp \mu,\varepsilon)$ by inductively setting
 \[
  U_i := \big\{x \in B(\supp\mu,\varepsilon)\,:\,\sfd(x,x_i)\le \sfd(x,x_j) \text{ for all }j \ne i\big\} \setminus \bigcup_{k < i}U_k.
 \]
 Notice that by the $4r$-separation of $\{x_i\}_{i=1}^N$ we have
 \[
  B(x_i,2r) \cap B(\supp\mu,\epsilon) \subset U_i \qquad\text{for all }i.
 \]
 Since $\{x_i\}_{i=1}^N$ is a maximal $4r$-separated net of points
 in $\supp \mu$ and $U_i \subset B(\supp\mu,\varepsilon)$, we have
 \begin{equation}\label{eq:Udiam}
  \diam(U_i) \le 8r + 2\varepsilon \qquad\text{for all }i.
 \end{equation}
 For all $x \in B(x_i,r)$ and $y \notin U_i$ we have $\sfd(x,y)>r$ by the triangle inequality. Thus
 from $B(x_i,r) \ge m(r)$ and \eqref{eq:assumption} we have that
 \begin{equation}\label{eq:lowerbound}
   \lambda(U_i \times U_i) \ge \frac{m(r)}{2} \qquad\text{for all }i.
 \end{equation}
 In particular, $\nu(U_i)>0$ for all $i$.
 
 Let us now define a new transport $\eta \in \Pi(\mu,\nu)$ in three parts. The first one takes care of the small distance transports
 between different $U_i$, the second one takes care of the long distance transports and the third one handles the remaining transports
 inside the sets $U_i$. Let us write
 \[
  A := \{(x,y) \in X \times X\,:\,\sfd(x,y) \ge r\}.
 \]
 For each pair $i,j \in \{1,\dots,N\}$, $i \ne j$ define
 \[
  \eta_{i,j} := \lambda((U_i\times U_j) \setminus A)\left(\frac{\mu\restr{U_i}}{\mu(U_i)} \times \frac{\nu\restr{U_j}}{\nu(U_j)}\right).
 \]
 Notice that if $(U_i\times U_j) \setminus A \ne \emptyset$ we have by \eqref{eq:Udiam} for all $(x,y) \in U_i\times U_j$ the estimate
 \begin{equation}\label{eq:dist1}
   \sfd(x,y) \le \diam(U_i) + \diam(U_j) + r \le 8r + 2\varepsilon + 8r  + 2\varepsilon + r = 17r + 4\varepsilon.
 \end{equation}
 The measures $\eta_{i,j}$ form the first part of the new transport.
 
 For the second part of the transport we select for each $i,j \in \{1,\dots,N\}$ a suitable
 chain of sets joining $U_i$ to $U_j$. In the case $\delta>0$ such chain is obtained as follows.
 Take $x \in U_i\cap\supp\mu$ and $y \in U_j\cap \supp\mu$. By the $\delta$-connectedness of $\supp\mu$ 
 there exists a sequence $(y_l)_{l=1}^M$ in $\supp\mu$
 such that $y_1 = x$, $y_M = y$ and $\sfd(y_l,y_{l+1})\le\delta$ for all $l$.
 Define $k_1 = i$ and inductively for $l>1$ the number $k_l$ as the one satisfying $y_{p+1} \in U_{k_l}$ where
 $p$ is the largest index in $\{1, \dots, M\}$ such that $y_p \in U_{k_{l-1}}$.
 This way we have defined a sequence $(k_l)_{l=1}^L \subset \{1,\dots,N\}$
 having the properties $\dist(U_{k_l},U_{k_{l+1}}) \le \delta$ for all $l \in \{1,\dots, L-1\}$, $U_{k_1} = U_i$, $U_{k_L} = U_j$ and
 $U_{k_l} \ne U_{k_l'}$ for $l \ne l'$. In the case $\delta = 0$ such sequence exists by a similar argument.

 Now we define
 \[
  \tilde\eta_{i,j} := \sum_{l = 1}^{L-1}
  \lambda((U_i\times U_j) \cap A)\left(\frac{\mu\restr{U_{k_l}}}{\mu(U_{k_l})} \times \frac{\nu\restr{U_{k_{l+1}}}}{\nu(U_{k_{l+1}})}\right).
 \]
 For all $(x,y) \in \supp(\tilde\eta_{i,j})$ we have $i \ne j$ and thus
 there exists some $l$ such that $x \in U_{k_l}$ and $y \in U_{k_{l+1}}$. Therefore, using \eqref{eq:Udiam} we get
 \begin{equation}\label{eq:dist2}
   \sfd(x,y) \le \diam(U_{k_l}) + \diam(U_{k_{l+1}}) + \dist(U_{k_l},U_{k_{l+1}}) \le 8r  + 2\varepsilon+ 8r  + 2\varepsilon+ \delta = 16r + 4\varepsilon + \delta.
 \end{equation}
 Notice that
 \[
  \tilde\eta_{i,j}(X \times U_k) = \begin{cases}
                                    \lambda((U_i\times U_j) \cap A), & \text{if }k \in \{k_2,\dots, k_L\},\\
                                    0, & \text{otherwise}
                                   \end{cases}
 \]
 and
 \[
  \tilde\eta_{i,j}(U_k\times X) = \begin{cases}
                                    \lambda((U_i\times U_j) \cap A), & \text{if }k \in \{k_1,\dots, k_{L-1}\},\\
                                    0, & \text{otherwise}.
                                   \end{cases}
 \]
 Therefore
 \begin{equation}\label{eq:cancellation}
  \tilde\eta_{i,j}(X \times U_k) - \tilde\eta_{i,j}(U_k\times X)
    = \begin{cases}
         \lambda((U_i\times U_j) \cap A), & \text{if }k = j,\\
         -\lambda((U_i\times U_j) \cap A), & \text{if }k = i,\\
            0, & \text{otherwise}
     \end{cases}
 \end{equation}
 and
 \begin{equation}\label{eq:tildesum}
  \sum_{j,k} \tilde\eta_{j,k}(U_i \times X) \le \sum_{j,k} \lambda((U_j\times U_k) \cap A) \le \lambda(A).
 \end{equation}

 The remaining third part will be given by the measures
 \[
  \eta_{i,i} := \beta_i\left(\frac{\mu\restr{U_i}}{\mu(U_i)} \times \frac{\nu\restr{U_i}}{\nu(U_i)}\right),
 \]
 where
 \[
  \beta_i := \mu(U_i) - \sum_{j \ne i}\lambda((U_i\times U_j) \setminus A) - \sum_{j,k} \tilde\eta_{j,k}(U_i \times X)
  \ge \lambda(U_i\times U_i) - \lambda(A) > 0,
 \]
 by \eqref{eq:tildesum}, the assumption \eqref{eq:assumption} and the estimate \eqref{eq:lowerbound}.
 Clearly, for all $x,y \in U_i$ we have by \eqref{eq:Udiam} that
 \begin{equation}\label{eq:dist3}
  \sfd(x,y) \le \diam(U_i) \le 8r + 2\varepsilon.
 \end{equation}
 Let us now write $\eta = \sum_{i,j}(\eta_{i,j} + \tilde\eta_{i,j})$.
 Denoting by $\mathtt{p}_i$ the projection to $i$:th component we have
 \begin{align*}
  \mathtt{p_1}_\sharp\eta  = \sum_{i,j}(\mathtt{p_1}_\sharp\eta_{i,j} + \mathtt{p_1}_\sharp\tilde\eta_{i,j})
  & = \sum_{i}\left(\beta_i + \sum_{j\ne i}\lambda((U_i\times U_j) \setminus A) + \sum_{j,k} \tilde\eta_{j,k}(U_i \times X) \right)\frac{\mu\restr{U_i}}{\mu(U_i)}\\
  & = \sum_{i}\mu(U_i)\frac{\mu\restr{U_i}}{\mu(U_i)} = \mu
 \end{align*}
 and, using \eqref{eq:cancellation}, we get
 \begin{align*}
  \mathtt{p_2}_\sharp\eta  = \sum_{i,j}(\mathtt{p_2}_\sharp\eta_{i,j} + \mathtt{p_2}_\sharp\tilde\eta_{i,j})
  & = \sum_{j}\left(\beta_j + \sum_{i\ne j}\lambda((U_i\times U_j) \setminus A) + \sum_{i,k} \tilde\eta_{i,k}(X \times U_j) \right)\frac{\nu\restr{U_j}}{\nu(U_j)}\\
  & = \sum_{j}\Bigg(\mu(U_j) + \sum_{i\ne j}\left(\lambda((U_i\times U_j) \setminus A) - \lambda((U_j\times U_i) \setminus A)\right)\\
  & \qquad + \sum_{i,k} \left(\tilde\eta_{i,k}(X \times U_j) - \tilde\eta_{i,k}(U_j \times X)\right) \Bigg)\frac{\nu\restr{U_j}}{\nu(U_j)}\\
  & = \sum_{j}\Bigg(\mu(U_j) + \sum_{i\ne j}\left(\lambda((U_i\times U_j) \setminus A) - \lambda((U_j\times U_i) \setminus A)\right)\\
  & \qquad + \sum_{i\ne j} \left(\lambda((U_i\times U_j) \cap A) - \lambda((U_j\times U_i) \cap A)\right) \Bigg)\frac{\nu\restr{U_j}}{\nu(U_j)}\\
  & = \sum_{j}\left(\mu(U_j) + \lambda((X\setminus U_j)\times U_j) - \lambda(U_j \times (X \setminus U_j)) \right)\frac{\nu\restr{U_j}}{\nu(U_j)}\\
  & = \sum_{j}\nu(U_j)\frac{\nu\restr{U_j}}{\nu(U_j)} = \nu.
 \end{align*}
 Thus $\eta \in \Pi(\mu,\nu)$.  

 Therefore, by the estimates \eqref{eq:dist1}, \eqref{eq:dist2}, and \eqref{eq:dist3} we have
 \[
  W_\infty(\mu,\nu) \le \eta\text{-}\esup_{(x,y) \in X^2} \sfd(x,y)\le 17r + 4\varepsilon + \delta
 \]
 as claimed.
\end{proof}

\subsection{Existence of $W_\infty$ lower bounds}

The following lemma will be used in showing the necessity of compactness and connectedness of $\supp \mu$ in Theorem
\ref{thm:main}.

\begin{lemma}\label{lma:komponentit}
Let $(X,\sfd)$ be a complete metric space and $\mu \in \prob{X}$.
If $\mu$ satisfies \eqref{eq:maineq}, then there cannot exist nonempty Borel sets $A,B\subset \supp\mu$ such that 
$A\cup B=\supp\mu$ and $\dist(A,B)>0$.
% \begin{align*}
% & B \;\textrm{ is bounded}, \quad \mu(A), \mu(B)>0, \\
% & A\cup B=\supp(\mu), \;\; A\cap B=\varnothing \;\;\textrm{ and} \\
% & \dist(A,B):=\inf_{x\in A, y\in B} \sfd(x,y)>0.
% \end{align*}
\end{lemma}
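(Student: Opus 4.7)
The plan is to argue by contraposition. Assuming there exist nonempty Borel sets $A,B\subset\supp\mu$ with $A\cup B=\supp\mu$ and $\eta:=\dist(A,B)>0$, I will exhibit, for any admissible $\omega$, a family of measures $\nu_t\in\prob{\supp\mu}$ ($t\downarrow 0$) violating \eqref{eq:maineq}. The underlying idea is to move an arbitrarily small mass $t$ from a tiny ball inside $A$ to a tiny ball inside $B$: the cost of this modification is of order $t$, but every coupling between $\mu$ and the resulting measure is forced to transport mass $t$ across the gap of width at least $\eta$, which keeps $W_\infty$ bounded below by $\eta$.

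The first step is to check that both $A$ and $B$ have positive $\mu$-measure. For any $x\in A\subset\supp\mu$ and any $0<r<\eta$ the ball $B(x,r)$ is disjoint from $B$, so $B(x,r)\cap\supp\mu\subset A$, and the defining property of the support yields $\mu(A)\ge\mu(B(x,r))>0$; similarly $\mu(B)>0$. Next I would fix $x_0\in A$, $y_0\in B$ and a radius $r\in(0,\eta/2)$, and set $U:=B(x_0,r)$, $V:=B(y_0,r)$ together with the normalized measures $\mu_U:=\mu|_U/\mu(U)$, $\mu_V:=\mu|_V/\mu(V)$. For $0<t<\mu(U)$ the test measure is
\[
\nu_t \;:=\; \mu - t\mu_U + t\mu_V \;\in\;\prob{\supp\mu},
\]
and the natural candidate coupling is $\lambda_t:=(\id,\id)_\sharp(\mu - t\mu_U) + t\,(\mu_U\times\mu_V)$. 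A direct check of marginals shows $\lambda_t\in\Pi(\mu,\nu_t)$; the diagonal part contributes zero cost, so
\[
\int h\circ\sfd\,\d\lambda_t \;\le\; t\,h\bigl(\sfd(x_0,y_0)+2r\bigr) \;\xrightarrow[t\to 0]{}\; 0,
\]
the upper bound being finite because $h$ takes values in $[0,\infty)$.

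For the lower bound on $W_\infty(\mu,\nu_t)$, the key identity is $\nu_t(B)-\mu(B)=t$. Combined with $\lambda(B\times B)\le\lambda(B\times X)=\mu(B)$ and the fact that $\lambda$ is concentrated on $(A\cup B)^2$, this forces $\lambda(A\times B)\ge t$ for every $\lambda\in\Pi(\mu,\nu_t)$. Since $\sfd\ge\eta$ on $A\times B$, we obtain $\lambda\text{-}\esup\sfd\ge\eta$, and hence $W_\infty(\mu,\nu_t)\ge\eta$. Under \eqref{eq:maineq} this would yield $t\,h(\sfd(x_0,y_0)+2r)\ge\omega(\eta)>0$, which fails once $t$ is small enough, giving the desired contradiction.

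The only nonroutine point is the bookkeeping estimate $\lambda(A\times B)\ge\nu_t(B)-\mu(B)$; everything else is a marginal computation followed by invoking the strict positivity of $\omega$ on $(0,\infty)$. Localizing to small balls $U$ and $V$ rather than using all of $A$ and $B$ is needed to keep the upper bound on the transport cost finite in case $\supp\mu$ is unbounded.
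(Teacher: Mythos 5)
Your proof is correct and follows essentially the same strategy as the paper: construct $\nu_t$ by shifting a small mass $t$ from (near) $A$ to (near) $B$, bound the left-hand side of \eqref{eq:maineq} by $O(t)$ via a near-diagonal coupling, observe that every $\lambda\in\Pi(\mu,\nu_t)$ must transport mass $\ge t$ across the gap so that $W_\infty(\mu,\nu_t)\ge\dist(A,B)>0$, and send $t\to 0$ to contradict the strict positivity of $\omega$. The only difference is in the localization: the paper works inside a large ball $B(x,R)$ intersected with $A$ and $B$, whereas you move mass between small balls around a single point of $A$ and a single point of $B$ and write the coupling explicitly as identity plus $t\,(\mu_U\times\mu_V)$, which makes the $O(t)$ cost bound transparent and keeps it finite even when $\supp\mu$ is unbounded.
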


\begin{proof}
Suppose such $A$ and $B$ exist. Since $A$ and $B$ are nonempty and $\dist(A,B)>0$, we have $\mu(A)>0$ and $\mu(B)>0$.
Take $x \in X$ and let $R>0$ be large enough so that
$\mu(\tilde A)>0$ and $\mu(\tilde B)>0$ where 
$\tilde A := A \cap B(x,R)$ and $\tilde B := B \cap B(x,R)$.
Define for all $0<t<\mu(\tilde A)$
\[
\nu_t:=\mu\restr{X \setminus B(x,R)}+\frac{\mu(\tilde A)-t}{\mu(\tilde A)}\mu\restr{\tilde A}+\frac{\mu(\tilde B)+t}{\mu(\tilde B)}\mu\restr{\tilde B}.
\]
Now, % choosing a suitable test plan (transporting only in $B(x,R)$) gives
using
\[
 \tilde\lambda = (\id,\id)_\sharp\mu\restr{X \setminus B(x,R)} + \frac{1}{\mu(B(x,R))}\mu\restr{B(x,R)}\times\nu_t\restr{B(x,R)} \in \Pi(\mu,\nu_t),
\]
we have
\[
\inf_{\lambda\in\Pi(\mu,\nu_t)} \int h\circ \sfd \,\d\lambda  \le \int h\circ \sfd \,\d\tilde\lambda \leq th(2R).
\]
On the other hand, since $\nu_t(A) < \mu(A)$, we also have $W_\infty(\mu,\nu_t)\geq \dist(A,B)>0$. Since $\nu_t\in\prob{\supp\mu}$ we can apply \eqref{eq:maineq} and thus
\[
th(2R)\geq \inf_{\lambda\in\Pi(\mu,\nu_t)}\int h\circ \sfd\,\d\lambda \; \geq \; \omega\big(W_\infty(\mu,\nu_t)\big) \geq \omega(\dist(A,B))>0.
\]
Letting $t\to 0$ gives a contradiction.
\end{proof}

\begin{proof}[Proof of Theorem \ref{thm:main}]
 Let us first show that connectedness and compactness of $\supp\mu$ imply \eqref{eq:maineq}.
 We may suppose $W_\infty(\mu,\nu) > 0$. 
%  Write 
%  \[
%   r := \frac{W_\infty(\mu,\nu)}{17}.  
%  \]
 Now applying Lemma \ref{lma:bigpartlongway} with $\varepsilon=\delta = 0$ we see that for all $\lambda \in \Pi(\mu,\nu)$ we have
 \[
  \lambda\left\{(x,y) \in X \times X\,:\,\sfd(x,y) \ge \frac{W_\infty(\mu,\nu)}{17}\right\} \ge \frac{m(\frac{W_\infty(\mu,\nu)}{17})}{2}.
 \]
 In particular,
 \[
  \int h\circ \sfd\,\d\lambda \ge \frac{m(\frac{W_\infty(\mu,\nu)}{17})}{2}h(\frac{W_\infty(\mu,\nu)}{17}).
 \]
 By Lemma \ref{lma:mexists} the function $m$ is positive. Thus the inequality \eqref{eq:maineq} holds with the $\omega$
 claimed in the theorem.
\medskip
 
Now we prove that if $\mu$ satisfies \eqref{eq:maineq}, then $\supp\mu$ is compact and connected.
Let us first show that $\supp \mu$ is compact. By Lemma \ref{lma:mexists} it suffices to show that
 $m(r)>0$ for all $r>0$. 
 To this end fix $r>0$ and $x\in\supp\mu$. 
 Let $B:=B(x,r)$, $A_1:=\supp\mu\cap B(x,2r)\setminus B$ and $A_2:=\supp\mu\setminus B(x,2r)$. We may assume $\mu(B)<1$. Then $\mu(A_1)>0$ by Lemma \ref{lma:komponentit}. Define 
\[
\nu_t:=\mu\restr{A_2}+\frac{\mu(A_1)-t}{\mu(A_1)} \mu\restr{A_1}+(\mu(B)+t)\delta_x, \quad\textrm{for any}\;\: 0<t<\mu(A_1).
\]
Now, %on one hand, choosing a suitable test plan gives
using
\[
 \tilde\lambda = (\id,\id)_\sharp\left(\mu\restr{A_2}+\frac{\mu(A_1)-t}{\mu(A_1)} \mu\restr{A_1}\right)
  + \left(\frac{t}{\mu(A_1)}\mu\restr{A_1} + \mu\restr{B}\right)\times\delta_x,
\]
we have
\[
\inf_{\lambda\in\Pi(\mu,\nu_t)} \int h\circ \sfd \,\d\lambda \leq th(2r)+\mu(B)h(r),
\]
and on the other hand we also have $W_\infty(\mu,\nu_t)\geq r$. 
Since $\nu\in\prob{\supp\mu}$ we obtain by applying \eqref{eq:maineq}  the estimate
\[
th(2r)+\mu(B)h(r) \geq \omega(r),
\]
which gives $\mu(B)\geq \omega(r)/h(r)>0$ by letting $t\to 0$.
Thus $m(r)>0$ and by Lemma \ref{lma:mexists} $\supp\mu$ is compact.
Since $\supp \mu$ is compact, if it had two connected components they
would have positive distance from each other. This would contradict
Lemma \ref{lma:komponentit}.
Thus $\supp\mu$ is connected.
\end{proof}

For the sharpness of Theorem \ref{thm:main} we have the following result.

\begin{proposition}\label{prop:sharpness}
 Let $\mu\in \prob{X}$ be with compact and connected support.
 Then for all $0 < r < \diam(\supp \mu)$ there exists $\nu \in \prob{\supp \mu}$ such that $W_\infty(\mu,\nu) = r$ and
  \[
  \inf_{\lambda \in \Pi(\mu,\nu)}\int h\circ \sfd\,\d\lambda \le \tilde\omega(W_\infty(\mu,\nu))
 \]
 with $\tilde\omega(t) = m(t)h(t)$, where $m(t) = \inf_{x \in \supp\mu}\mu(B(x,t))$.
\end{proposition}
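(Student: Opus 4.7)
My plan is to exhibit $\nu$ by concentrating the mass of a smallest-measure ball of radius $r$ at its center. Using the lower semicontinuity of $x \mapsto \mu(B(x,r))$ (as in the proof of Lemma \ref{lma:mexists}) and the compactness of $\supp\mu$, I first pick $x_0 \in \supp\mu$ attaining $\mu(B(x_0,r)) = m(r)$ and set
\[
 \nu := \mu\restr{X \setminus B(x_0,r)} + m(r)\delta_{x_0} \in \prob{\supp\mu}.
\]
The natural candidate transport plan is
\[
 \lambda := (\id,\id)_\sharp \mu\restr{X\setminus B(x_0,r)} + \mu\restr{B(x_0,r)} \times \delta_{x_0},
\]
which lies in $\Pi(\mu,\nu)$ by a one-line marginal check. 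Since $\sfd(x,x_0)<r$ on $B(x_0,r)$ and $h$ is nondecreasing, $\int h\circ\sfd\,\d\lambda \le m(r)h(r)$, and the same support property gives $W_\infty(\mu,\nu)\le r$ for free.

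The substantive part is the matching lower bound $W_\infty(\mu,\nu)\ge r$. I would argue by contradiction: if some $\tilde\lambda\in\Pi(\mu,\nu)$ witnesses $W_\infty(\mu,\nu)<r-\eta$ for some $\eta>0$, then the atom $\nu(\{x_0\})=m(r)$ together with $\tilde\lambda$-a.e.\ $\sfd<r-\eta$ forces
\[
 \mu(B(x_0,r-\eta)) \;\ge\; \tilde\lambda(B(x_0,r-\eta)\times\{x_0\}) \;=\; m(r) \;=\; \mu(B(x_0,r)),
\]
so the shell $B(x_0,r)\setminus B(x_0,r-\eta)$ must have $\mu$-measure zero. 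Here the hypotheses enter: since $r<\diam(\supp\mu)$ forces $m(r)<1$, the compact connected set $\supp\mu$ is not contained in $B(x_0,r)$, so the continuous image of $\sfd(x_0,\cdot)$ on $\supp\mu$ is an interval $[0,M]$ with $M\ge r$. Any $x\in\supp\mu$ with $\sfd(x_0,x)=r-\eta/2$ then produces a small open ball sitting inside the shell with positive $\mu$-measure, contradicting the previous line.

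The step I expect to require most care is this last one: both the minimality of $x_0$ (to turn $\tilde\lambda(B(x_0,r-\eta)\times\{x_0\})=m(r)$ into an empty shell) and the connectedness of $\supp\mu$ (to populate the shell with points of the support) have to be combined to pin $W_\infty(\mu,\nu)$ at exactly $r$. The cost inequality and the upper bound $W_\infty(\mu,\nu)\le r$ are then just routine bookkeeping for the explicit plan $\lambda$.
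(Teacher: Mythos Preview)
Your proof is correct and follows essentially the same construction as the paper: choose $x_0\in\supp\mu$ realizing $m(r)$, set $\nu=\mu\restr{X\setminus B(x_0,r)}+m(r)\delta_{x_0}$, and use the obvious plan to bound both the cost and $W_\infty$ from above. The paper simply asserts ``by the connectedness of $\supp\mu$ we have $W_\infty(\mu,\nu)=r$'' without further argument, whereas you supply the missing lower bound via the shell argument (empty shell $B(x_0,r)\setminus B(x_0,r-\eta)$ forced by a short transport, contradicted by the intermediate value theorem on $\sfd(x_0,\cdot)$); this is exactly the right way to unpack that one-line claim. One minor remark: your closing comment that minimality of $x_0$ is what ``turns $\tilde\lambda(B(x_0,r-\eta)\times\{x_0\})=m(r)$ into an empty shell'' is slightly misleading---minimality is needed so that $\nu$ is a probability measure and so that the cost bound reads $m(r)h(r)$, but the shell argument itself only uses $\nu(\{x_0\})=\mu(B(x_0,r))$ and would work for any center.
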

\begin{proof}
 Let $x \in \supp\mu$ be such that $\mu(B(x,r)) = m(r)$. Such $x$ exists by the compactness of $\supp\mu$
 and the lower semicontinuity of the function $x \to \mu(B(x,r))$. Define
 \[
  \nu = \mu\restr{X \setminus B(x,r)} + \mu(B(x,r))\delta_x.
 \]
 Then by the connectedness of $\supp \mu$ we have $W_\infty(\mu,\nu) = r$. Clearly
 \begin{align*}
  \inf_{\lambda \in \Pi(\mu,\nu)}\int h\circ \sfd\,\d\lambda & 
  \le \mu(B(x,r))\inf_{\lambda \in \Pi((\mu(B(x,r)))^{-1}\mu\restr{B(x,r)},\delta_x)}\int h\circ \sfd\,\d\lambda\\
  & = \mu(B(x,r))\int_{B(x,r)} h\circ \sfd(z,x)\,\d\mu(z)
  \le \mu(B(x,r))h(r) = m(r)h(r).
 \end{align*}
\end{proof}

Notice that Proposition \ref{prop:sharpness} does not in general give a sharp bound since the inequality
\[
\int_{B(x,r)} h\circ \sfd(z,x)\,\d\mu(z)\le h(r)
\]
could be sharpened.

\subsection{Comparison of convergence in $W_p$ and $W_\infty$}

Let us then turn to the proof of Theorem \ref{thm:convergence}. We start with a simple observation.

\begin{lemma}\label{lma:HausdoffWinfty}
 Let $\mu,\nu \in \prob{X}$. Then
 \[
  \sfd_H(\supp\mu,\supp\nu) \le W_\infty(\mu,\nu).
 \]
\end{lemma}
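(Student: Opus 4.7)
The estimate is a direct soft argument from the definitions, so the proof plan is short.

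My plan is the following. The inequality is trivial when $W_\infty(\mu,\nu) = \infty$, so assume $W_\infty(\mu,\nu) < \infty$. Fix $\varepsilon > 0$ and, using the definition of $W_\infty$ as an infimum, choose $\lambda \in \Pi(\mu,\nu)$ with
\[
\lambda\text{-}\esup_{(x,y) \in X^2}\sfd(x,y) \le W_\infty(\mu,\nu) + \varepsilon.
\]
Set
\[
E := \{(x,y) \in X \times X : \sfd(x,y) \le W_\infty(\mu,\nu) + \varepsilon\} \cap (X \times \supp\nu).
\]
Since $\nu(\supp\nu) = 1$, the marginals of $\lambda$ give $\lambda(E) = 1$.

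Now I would prove $\sup_{x_0 \in \supp\mu}\dist(x_0,\supp\nu) \le W_\infty(\mu,\nu)$; the symmetric bound for points of $\supp\nu$ follows identically. Fix $x_0 \in \supp\mu$ and $r > 0$. By definition of the support, $\mu(B(x_0,r)) > 0$, so
\[
\lambda\bigl(B(x_0,r) \times X\bigr) = \mu(B(x_0,r)) > 0,
\]
and hence $E \cap (B(x_0,r) \times X) \ne \emptyset$. Picking $(x,y)$ in this intersection, the triangle inequality gives $y \in \supp\nu$ with
\[
\sfd(x_0,y) \le \sfd(x_0,x) + \sfd(x,y) \le r + W_\infty(\mu,\nu) + \varepsilon,
\]
so $\dist(x_0,\supp\nu) \le r + W_\infty(\mu,\nu) + \varepsilon$. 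Letting $r \to 0$ and then $\varepsilon \to 0$ yields $\dist(x_0,\supp\nu) \le W_\infty(\mu,\nu)$, and taking the supremum over $x_0 \in \supp\mu$ gives the claim.

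There is essentially no obstacle; the only mild subtlety is remembering to intersect with $X \times \supp\nu$ so that the point $y$ obtained in the argument actually lies in $\supp\nu$, and to work with an $\varepsilon$-almost minimizer of the $W_\infty$ problem rather than assuming attainment of the infimum.
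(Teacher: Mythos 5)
Your argument is correct and is essentially the paper's proof written out in more detail: the paper simply asserts that $W_\infty(\mu,\nu)\ge\dist(B(x,\varepsilon),\supp\nu)$ for each $x\in\supp\mu$ and $\varepsilon>0$, and you are unpacking exactly that claim by working with an $\varepsilon$-almost-minimizing plan $\lambda$ and the full-measure set $E$.
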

\begin{proof}
 Let $x \in \supp\mu$. Then for all $\varepsilon>0$ we have $\mu(B(x,\varepsilon))>0$ and hence
 \[
  W_\infty(\mu,\nu) \ge \dist(B(x,\varepsilon),\supp\nu).
 \]
 Taking $\varepsilon \to 0$, supremum over $x \in \supp\mu$ and making a similar argument for $x \in \supp\nu$,
 the claim follows.
\end{proof}

Due to compactness, under weak convergence of measures, Hausdorff convergence of supports is the same as uniform 
lower bounds on the measure of balls. This is the content of the next lemma.

\begin{lemma}\label{lma:condition2equiv}
 Let $(X,\sfd)$ be a complete metric space.
 Suppose $(\mu_i)_{i=1}^\infty$ is a sequence of compactly supported probability measures in $X$
 and $\mu \in \prob{X}$ such that $W_p(\mu_i,\mu) \to 0$ as $i \to \infty$.
 Then
 \begin{equation}\label{eq:uniformbound}
  m(r) := \inf_{i \in \N}\inf_{x \in \supp \mu_i}\mu_i(B(x,r)) > 0\qquad \text{for all }r>0 
 \end{equation}
 if and only if $\sfd_{H}(\supp \mu_i,\supp \mu) \to 0$ as $i \to \infty$.
\end{lemma}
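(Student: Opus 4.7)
The plan is to reduce both implications to the Portmanteau theorem, using that $W_p$-convergence implies weak convergence. The main obstacle is that in the converse direction $\mu$ is not a priori compactly supported, so an intermediate step must establish compactness of $\supp\mu$ from the uniform ball-bound before Hausdorff convergence can be extracted.

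For the direction assuming $\sfd_H(\supp\mu_i,\supp\mu)\to 0$, I first note that $\supp\mu$ is the Hausdorff limit of compact sets in the complete space $X$, hence compact. Fix $r>0$ and cover $\supp\mu$ by finitely many balls $B(y_k,r/4)$, $k=1,\dots,K$, centered at points of $\supp\mu$. Each satisfies $\mu(B(y_k,r/4))>0$, so by Portmanteau for open sets there is $i_0$ and a constant $c>0$ with $\mu_i(B(y_k,r/4))\ge c$ for all $k$ and all $i\ge i_0$. For $i\ge i_0$ and $x\in\supp\mu_i$, Hausdorff convergence furnishes $y\in\supp\mu$ within distance $r/4$ and then $y_k$ within $r/4$ of $y$, so $B(y_k,r/4)\subset B(x,r)$ and $\mu_i(B(x,r))\ge c$. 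The finitely many indices $i<i_0$ are handled individually by Lemma \ref{lma:mexists}, yielding \eqref{eq:uniformbound}.

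For the converse, assume \eqref{eq:uniformbound}. The compactness of $\supp\mu$ is proven by a separated-balls counting argument: if $\supp\mu$ contained an $\varepsilon$-separated sequence $\{x_j\}_{j=1}^\infty$, then $\mu(B(x_j,\varepsilon/3))>0$ would, via Portmanteau, produce points $z_{i,j}\in\supp\mu_i\cap B(x_j,\varepsilon/3)$ for large $i$ and any finite set of indices $j$; these would be $\varepsilon/3$-separated in $\supp\mu_i$, so the disjoint balls $B(z_{i,j},\varepsilon/6)$ would each carry $\mu_i$-mass at least $m(\varepsilon/6)$, contradicting $\mu_i(X)=1$ once the number of indices exceeds $1/m(\varepsilon/6)$. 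Thus $\supp\mu$ is totally bounded and, being closed in a complete space, compact.

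With $\supp\mu$ compact, both halves of the Hausdorff convergence follow by contradiction via Portmanteau. If $\sup_{x\in\supp\mu}\dist(x,\supp\mu_i)\not\to 0$, I pass to a subsequence with $x_{i_k}\in\supp\mu$ and $\dist(x_{i_k},\supp\mu_{i_k})\ge\varepsilon$, extract $x_{i_k}\to x_\infty\in\supp\mu$ by compactness, and note that $B(x_\infty,\varepsilon/2)$ is eventually disjoint from $\supp\mu_{i_k}$, contradicting $\liminf_k \mu_{i_k}(B(x_\infty,\varepsilon/2))\ge\mu(B(x_\infty,\varepsilon/2))>0$. If instead $\sup_{y\in\supp\mu_i}\dist(y,\supp\mu)\not\to 0$, pick $y_{i_k}\in\supp\mu_{i_k}$ with $\dist(y_{i_k},\supp\mu)\ge\varepsilon$; the closed set $F=\{x:\dist(x,\supp\mu)\ge\varepsilon/2\}$ contains every $B(y_{i_k},\varepsilon/2)$, so $\mu_{i_k}(F)\ge m(\varepsilon/2)>0$ by \eqref{eq:uniformbound}, contradicting $\limsup_k\mu_{i_k}(F)\le\mu(F)=0$. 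The only delicate step is the compactness of $\supp\mu$; everything else is essentially a careful application of Portmanteau to well-chosen balls.
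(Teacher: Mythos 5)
Your proof is correct. The forward direction is essentially the same as the paper's: cover the (compact) $\supp\mu$ by finitely many small balls, get a uniform lower bound on their $\mu_i$-masses via weak convergence, then use Hausdorff closeness to transfer that bound to balls centered in $\supp\mu_i$; the only cosmetic difference is your choice of radii. The converse direction, however, takes a genuinely different route. The paper works quantitatively: it uses the pointwise inequality $W_p^p(\mu_i,\mu)\ge\bigl(\tfrac{\dist(x,\supp\mu_i)}{2}\bigr)^p\mu\bigl(B(x,\tfrac{\dist(x,\supp\mu_i)}{2})\bigr)$, upgrades the ball-mass factor to $m(\cdot)$ via the uniform bound, symmetrizes, and reads off $W_p^p(\mu_i,\mu)\ge\bigl(\tfrac{\sfd_H}{2}\bigr)^p m\bigl(\tfrac{\sfd_H}{2}\bigr)$, which forces $\sfd_H\to 0$; this yields an explicit rate. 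You instead argue qualitatively via Portmanteau and contradiction, and you make the compactness of $\supp\mu$ an explicit intermediate step by a separated-balls counting argument (if $\supp\mu$ had an $\eps$-separated sequence, weak convergence would plant $\eps/3$-separated points in $\supp\mu_i$, and \eqref{eq:uniformbound} would make their mass sum exceed $1$). The paper never isolates this compactness claim in the converse direction, folding it into the quantitative estimate. Both of your two contradiction arguments (the closed set $F=\{\dist(\cdot,\supp\mu)\ge\eps/2\}$ for one inclusion, a cluster point $x_\infty\in\supp\mu$ for the other) are correct applications of Portmanteau. Your version is arguably cleaner to read; the paper's version gives the bonus of a quantitative modulus, which is in the same spirit as the rest of the paper's $L^\infty$ estimates.
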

\begin{proof}
 Let us first assume that $\sfd_{H}(\supp \mu_i,\supp \mu) \to 0$ as $i \to \infty$.
 Since $\supp \mu_i$ are compact and since compactness is preserved in Hausdorff convergence, also $\supp \mu$ is compact.
 Thus by Lemma \ref{lma:mexists} it holds $\inf_{x \in \supp\mu}\mu(B(x,r))>0$ for all $r>0$. 

Fix $r>0$. Then by assumption there exists $i_1\in\N$ such that $\sfd_{H}(\supp \mu_i,\supp \mu)<{r}/{4}$ for all $i\ge i_1$. 
Since $\supp\mu$ is compact, it can be covered with a finite number of balls $B(x,{r}/{4})$, $r\in\supp\mu$. Let $\{B(x_j,{r}/{4})\}_{j=1}^N$ be this finite collection.
Note that $\{B(x_j,{r}/{2})\}_{j=1}^N$ also covers $\supp\mu_i$ for any $i\ge i_1$. 

The convergence $W_p(\mu_i,\mu)\to 0$ implies that for every $j \in \{1,\ldots, N\}$ we have
\[
\liminf_{i\to\infty} \mu_i(B(x_j,\frac{r}{2})) \ge \mu(B(x_j,\frac{r}{2})) \ge \inf_{x \in \supp\mu}\mu(B(x,\frac{r}{2}))>0.
\]
Thus there exists $i_2\in\N$ so that if $i\ge i_2$, the inequality $\mu_i(B(x_j,{r}/{2})) \ge \frac12 \inf_{x \in \supp\mu}\mu(B(x,{r}/{2}))$ holds for every $j=1,\ldots, N$. 

We take $i_0=\max\{i_1,i_2\}$. Now, if $i\ge i_0$ and $x\in\supp\mu_i$, then, since $i\ge i_1$, there exists $x_j\in\supp\mu$ such that $B(x_j,\frac{r}{2})\subset B(x,r)$. Combining this with $i\ge i_2$ we obtain
\[
\mu_i(B(x,r)) \ge \mu_i(B(x_j,\frac{r}{2})) \ge \frac12 \inf_{x \in \supp\mu}\mu(B(x,\frac{r}{2})).
\]
We conclude that \eqref{eq:uniformbound} holds, since we have
\[
m(r) \ge \min\big\{ \min_{1 \le i < i_0}\inf_{x \in \supp \mu_i}\mu_i(B(x,r)), \frac12 \inf_{x \in \supp\mu}\mu(B(x,\frac{r}{2})) \big\} > 0,
\]
where $\inf_{x \in \supp \mu_i}\mu_i(B(x,r))>0$ follows from compactness of $\supp\mu_i$ and Lemma \ref{lma:mexists}.
% Then for any $r>0$, by $W_\infty(\mu_i,\mu) \to 0$ there exists $i_0 \in \N$ such that %$W_\infty(\mu_i,\mu) \le \frac{r}{3}$
%for all $i \ge i_0$. Thus for any $x \in \supp\mu_i$ with $i \ge i_0$ there exists, by Lemma %\ref{lma:HausdoffWinfty}
% a point $y \in \supp \mu$ such that $\sfd(x,y) \le \frac{r}{3}$.
% Moreover, since $\dist(B(y,\frac{r}{3}),X\setminus B(x,r))\ge \frac{r}{3}$, the estimate %$W_\infty(\mu_i,\mu) \le \frac{r}{3}$
% gives
% \[
%  \mu(B(y,\frac{r}{3})) \le \mu_i(B(x,r)).
% \]
% Therefore
% \[
% \inf_{i \in \N}\inf_{x \in \supp \mu_i}\mu_i(B(x,r)) 
% \ge \min(\min_{1 \le i < i_0}\inf_{x \in \supp \mu_i}\mu_i(B(x,r)),\inf_{x \in %\supp\mu}\mu(B(x,\frac{r}{3}))) > 0
% \]
% and \eqref{eq:uniformbound} holds.
 \medskip
 
 Let us then assume that \eqref{eq:uniformbound} holds and show that $\sfd_{H}(\supp \mu_i,\supp \mu) \to 0$ as $i \to \infty$.
 First take $x \in \supp\mu$. Then
 \begin{equation}\label{eq:Wpestimate}
  W_p^p(\mu_i,\mu) \ge \left(\frac{\dist(x,\supp\mu_i)}{2}\right)^p\mu\left(B\left(x,\frac{\dist(x,\supp\mu_i)}{2}\right)\right).
 \end{equation} 
 Since $W_p^p(\mu_i,\mu) \to 0$, also $\dist(x,\supp\mu_i) \to 0$.
 Now by the uniform bound \eqref{eq:uniformbound} we also have $\mu(B(x,r))\ge m(r)$ for all $r>0$.
 Thus the inequality \eqref{eq:Wpestimate} leads to the uniform estimate
 \[
  W_p^p(\mu_i,\mu) \ge \left(\frac{\dist(x,\supp\mu_i)}{2}\right)^p m\left(\frac{\dist(x,\supp\mu_i)}{2}\right).
 \]
 Making a similar estimate for $x \in \supp\mu_i$ we obtain
 \[
  W_p^p(\mu_i,\mu) \ge \left(\frac{\sfd_H(\supp\mu_i,\supp\mu)}{2}\right)^p m\left(\frac{\sfd_H(\supp\mu_i,\supp\mu)}{2}\right).
 \]
 This gives $\sfd_H(\supp\mu_i,\supp\mu) \to 0$ as $i \to \infty$.
\end{proof}

Now we can apply Lemma \ref{lma:bigpartlongway} to prove Theorem \ref{thm:convergence}.

\begin{proof}[Proof of Theorem \ref{thm:convergence}]
 Assume first that $W_\infty(\mu_i,\mu) \to 0$ as $i \to \infty$. 
 Then, by \eqref{eq:trivial}, also $W_p(\mu_i,\mu) \to 0$ as $i \to \infty$.
 By Lemma \ref{lma:HausdoffWinfty} we have $\sfd_H(\supp \mu_i,\supp\mu) \to 0$ as $i \to \infty$.
 In order to see that (3) holds, suppose there exist sequences of positive measures $(\mu_i^1)_{i=1}^\infty$ and $(\mu_i^2)_{i=1}^\infty$ such that
 $\mu_i = \mu_i^1 + \mu_i^2$ for all $i$, $W_p(\mu_i^1,\mu^1) \to 0$ and $W_p(\mu_i^2,\mu^2) \to 0$
 for some measures $\mu^1$ and $\mu^2$, and with $d := \inf_{i\in\N}\dist(\supp\mu_i^1,\supp\mu_i^2)>0$.
 Then, if $\mu^1(X) \ne \mu_i^1(X)$ we have that any transport from $\mu_i$ to $\mu$ must transport measure between $\mu_i^1$
 and $\mu^2$. Thus
 \[
  W_\infty(\mu,\mu_i) \ge d - \sfd_H(\supp\mu_i,\supp\mu).
 \]
 Hence by Lemma \ref{lma:HausdoffWinfty}
 \[
  W_\infty(\mu,\mu_i) \ge \frac{d}{2}
 \]
 and by $W_\infty(\mu_i,\mu) \to 0$ there exists $i_0 \in \N$ such that $\mu_i^1(X) = \mu^1(X)$ for all $i \ge i_0$.
 \medskip
 
 Let us then show the converse direction. We will again apply Lemma \ref{lma:bigpartlongway}.
 Take $s>0$. Since $\sfd_H(\supp \mu_i,\supp\mu) \to 0$ as $i \to \infty$, also $\supp \mu$ is compact.
 Then by Lemma \ref{lma:mexists}
 \[
  m(t) := \inf_{x \in \supp\mu}\mu(B(x,t)) > 0 \qquad \text{for all }t > 0.
 \]
 By compactness $\supp \mu$ consists of a finite number of $s$-connected components. 
 We denote the corresponding parts of the measures by $(\mu^k)_{k=1}^\infty$. 
 Notice that by the Hausdorff-convergence of the supports we have $\mu_i \in \prob{B(\supp\mu,\sfd_H(\supp \mu_i,\supp\mu))}$.
 By this and the assumption (3) there exists $i_0 \in \N$ such that 
 $\mu_i^k(X) = \mu^k(X)$ for all $k$ and $i \ge i_0$ and
 $\sfd_H(\supp \mu_i,\supp\mu) < s$ for all $i \ge i_0$.
 
 Then, similarly as in the proof of Theorem \ref{thm:main}, by this time considering each component $\mu^k$ separately,
 we get from Lemma \ref{lma:bigpartlongway},
 with $\varepsilon = \delta = s$, the estimate
 \[
  W_p^p(\mu_i,\mu) \ge \frac{m(\frac{W_\infty(\mu_i,\mu)-5s}{17})}{2}\left(\frac{W_\infty(\mu_i,\mu)-5s}{17}\right)^p.
 \]
 Since $m$ is nondecrasing and positive and $W_p(\mu_i,\mu) \to 0$ as $i \to \infty$, by taking $s \to 0$ we have
 also $W_\infty(\mu_i,\mu) \to 0$ as $i \to \infty$.
\end{proof}

Let us end this section with an example, mentioned in the Introduction, that shows the necessity of the compactness assumption
in Theorem \ref{thm:convergence}.

\begin{example}\label{ex:noncompact}
 Let us first define $\mu \in \prob{\R}$ by setting
 \[
  \mu := c\sum_{n=1}^\infty e^{-n}\mathcal{L}\restr{[-n(n+1),-(n-1)n]\cup[(n-1)n,n(n+1)]},
 \]
 where the constant $c$ is chosen so that the total mass is one.
 For each $i$ define $\mu_i \in \prob{\R}$ as
 \begin{align*}
  \mu_i := & c\sum_{n=1}^i e^{-n}\mathcal{L}\restr{[-n(n+1),-(n-1)n]\cup[(n-1)n,n(n+1)]}\\
   & + \frac{c}2\sum_{n=i+1}^\infty e^{-n}\left(\mathcal{L}\restr{[-n(n+1),-(n-1)n]\cup[(n-1)n,n(n+1)]} + 2n\delta_{-n^2} + 2n\delta_{n^2}\right).
 \end{align*}
 Then $\supp \mu_i = \supp \mu = \R$, and thus in particular $\sfd_H(\supp\mu_1,\supp\mu) = 0$.
 Moreover, for every $1 < p < \infty$
 \[
  W_p^p(\mu_i,\mu) \le \frac{c}{2}\sum_{n=i+1}^\infty 4e^{-n}n^{p+1} \to 0 \qquad \text{as }i \to \infty.
 \]
 However, we have
 \[
  W_\infty(\mu_i,\mu) = \infty \qquad \text{for all }i \in \N,
 \]
 since for all $n > i$ it holds
 \[
  \mu([n^2-\frac{n}4,n^2+\frac{n}4]) = \frac{c ne^{-n}}2 < cne^{-n} = \mu_i(\{n^2\}).
 \]

\end{example}

\section{$L^\infty$ estimate for optimal transport plans}

This section is devoted to $L^\infty$ estimates on the level of optimal transport plans.
We present the proof of Theorem \ref{thm:planestimate} and after that, in Proposition \ref{prop:nicelyconnectedsupport},
we provide a sample case where the conditions of Theorem \ref{thm:planestimate} are satisfied.
% After that we study a critical case where one does not have a uniform $L^\infty$ estimate.

% \subsection{Proof of the $L^\infty$ estimate}

%Following the idea from \cite{BJR2007} we define for any $x,y,z\in X$ 
%\[
%\delta_h(x,y,z)=\inf_{w\in X} \{ \sfd(z,w): h(\sfd(z,w))+h(\sfd(x,y))\leq h(\sfd(z,y))+h(\sfd(x,w)) \}.
%\]
%We study $\delta_h$ as a function of $z$
%
%\begin{lemma}\label{lma:alaraja}
%Assume that there exists $\rho \colon [0,\infty) \to [0,\infty)$ with $\rho(t)>0$ for all $t>0$ %such that \eqref{eq:eisykli} holds. Let $x,y\in\supp\mu$. Then there exists $z_0\in \supp\mu$ such that
%\[
%\delta_h(x,y,z) \geq \rho(\sfd(x,y))-\sfd(z,z_0).
%\]
%\end{lemma}

Now we are dealing with optimal transport plans and their properties. To help with this we recall that in our case optimality of a transport plan is equivalent with cyclical monotonicity.

\begin{definition}
Let $c\colon X\times X\to \R$ be a continuous function. A transport plan $\lambda\in\prob{X\times X}$ is said to be \emph{$c$-cyclically monotone}, if for every finite number of points $(x_1,y_1),\ldots,(x_K,y_K)\in\supp\lambda$ we have
\[
\sum_{i=1}^K c(x_i,y_i) \leq \sum_{i=1}^K c(x_i,y_{\sigma(i)})
\]
for any permutation $\sigma$ of the set $\{1,2,\ldots,K\}$.
\end{definition}

The connection between optimality and cyclical monotonicity is proven for example in Villani's book \cite{Villani09}. The following Theorem is a special case of Theorem 5.10 in \cite{Villani09}.

\begin{theorem}\label{thm:optimaliscyclical}
Let $\mu,\nu\in\prob{X}$ be compactly supported and let $c\colon X\times X\to [0,\infty)$ be a continuous cost function. Then a transport plan $\lambda\in\Pi(\mu,\nu)$ is optimal if and only if it is $c$-cyclically monotone.
\end{theorem}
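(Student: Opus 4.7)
The plan is to prove the two directions separately, following the classical Rockafellar/Kantorovich duality approach adapted to continuous costs with compactly supported marginals.

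For the \emph{only if} direction (optimal $\Rightarrow$ $c$-cyclically monotone), I would argue by contradiction. Suppose $\lambda$ is optimal but there exist $(x_1,y_1),\dots,(x_K,y_K)\in\supp\lambda$ and a permutation $\sigma$ with $\sum_i c(x_i,y_{\sigma(i)})<\sum_i c(x_i,y_i)$. By continuity of $c$ and compactness of the supports, I can choose product neighborhoods $U_i\times V_i$ of each $(x_i,y_i)$ and some $\eps>0$ so that $\sum_i c(x_i',y_{\sigma(i)}')+\eps<\sum_i c(x_i',y_i')$ for every choice $(x_i',y_i')\in U_i\times V_i$. Since $(x_i,y_i)\in\supp\lambda$, each $U_i\times V_i$ has positive $\lambda$-mass; let $m>0$ be the minimum of these. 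I then form the product probability measure $\eta=\bigotimes_{i=1}^K \tfrac{1}{m}\lambda\restr{U_i\times V_i}$ on $(X\times X)^K$ and define
\[
\tilde\lambda \;=\; \lambda \;+\; \frac{m}{K}\sum_{i=1}^K\Big((\p_{i,\sigma(i)})_\sharp\eta - (\p_{i,i})_\sharp\eta\Big),
\]
where $\p_{i,j}$ picks out the first coordinate of the $i$-th factor and the second coordinate of the $j$-th factor. The fact that $\eta$ is a product and $\sigma$ is a permutation makes both marginals of the correction vanish, so $\tilde\lambda\in\Pi(\mu,\nu)$; positivity holds because $\tfrac{m}{K}(\p_{i,i})_\sharp\eta=\tfrac{1}{K}\lambda\restr{U_i\times V_i}$. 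Integrating $c$ against $\tilde\lambda-\lambda$ gives a gain of at least $m\eps/K$, contradicting optimality.

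For the \emph{if} direction ($c$-cyclically monotone $\Rightarrow$ optimal), the plan is to build a Kantorovich potential directly from $\supp\lambda$. Fix a base point $(x_0,y_0)\in\supp\lambda$ and, à la Rockafellar, define for $x\in X$
\[
\phi(x):=\inf\Big\{\sum_{j=0}^{N}\bigl(c(x_{j+1},y_j)-c(x_j,y_j)\bigr)\Big\},
\]
where $x_{N+1}=x$ and the infimum is taken over all finite sequences $(x_j,y_j)_{j=1}^N\subset\supp\lambda$. The $c$-cyclical monotonicity assumption ensures $\phi(x_0)=0$, hence $\phi$ is not identically $-\infty$; continuity and compactness of $\supp\mu\cup\supp\nu$ make $\phi$ bounded on these sets. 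Setting $\phi^c(y):=\inf_{x}(c(x,y)-\phi(x))$, one verifies the Young-type inequality $\phi(x)+\phi^c(y)\le c(x,y)$ on $X\times X$ and, crucially, equality on $\supp\lambda$ (this is exactly what the definition of $\phi$ is designed to give). Then for any competitor $\tilde\lambda\in\Pi(\mu,\nu)$,
\[
\int c\,\d\tilde\lambda \;\ge\; \int\phi\,\d\mu+\int\phi^c\,\d\nu \;=\; \int(\phi\oplus\phi^c)\,\d\lambda \;=\; \int c\,\d\lambda,
\]
proving optimality of $\lambda$.

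The first direction is routine modulo the marginal-bookkeeping of the product construction; the genuine obstacle is the second direction, and within it the delicate point is showing that $\phi$ and $\phi^c$ are $\mu$- and $\nu$-integrable (so that the splitting of $\int(\phi\oplus\phi^c)\,\d\lambda$ into the two marginal integrals is legitimate). Here compactness of $\supp\mu$ and $\supp\nu$ together with continuity of $c$ on the compact set $\supp\mu\times\supp\nu$ save the day: $c$ is bounded and uniformly continuous there, which forces $\phi$ to be bounded on $\supp\mu$ and $\phi^c$ to be bounded on $\supp\nu$, so both integrals are finite and the duality chain above is valid.
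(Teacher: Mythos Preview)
The paper does not actually prove this theorem: it is stated as a special case of Theorem~5.10 in Villani's book \cite{Villani09} and simply cited. Your sketch is the classical R\"uschendorf/Rockafellar argument, which is essentially what one finds in Villani, so in that sense you are aligned with the paper's (implicit) approach.

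One small technical slip: in the ``only if'' direction you write $\eta=\bigotimes_{i=1}^K \tfrac{1}{m}\lambda\restr{U_i\times V_i}$ and claim $\tfrac{m}{K}(\p_{i,i})_\sharp\eta=\tfrac{1}{K}\lambda\restr{U_i\times V_i}$. This identity is only correct if every $\lambda(U_i\times V_i)$ equals $m$; otherwise the extra mass from the other factors spoils the equality and hence the positivity check for $\tilde\lambda$. The standard fix is either to normalize each factor by its own mass $\lambda(U_i\times V_i)$ (and then scale the correction by $\min_i\lambda(U_i\times V_i)$), or to shrink the neighborhoods so that all rectangles carry the same $\lambda$-mass. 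In the ``if'' direction your boundedness claim for $\phi$ on $\supp\mu$ is correct but uses implicitly that $\p_1(\supp\lambda)=\supp\mu$; this holds here because $\supp\lambda$ is compact, hence its projection is closed and supports $\mu$. With these two clarifications your argument is complete.
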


To prove Theorem \ref{thm:planestimate} we first prove that in our situation we can get a uniform version of \eqref{eq:eisykli}.

\begin{lemma}\label{lma:rhoexists}
Suppose that $(X,\sfd)$, $h$ and $\mu$ are such that (1) of Theorem \ref{thm:planestimate} holds. Then there exists a nondecreasing function $\rho \colon [0,\infty)\to[0,\infty)$ with $\rho(t)>0$ for all $t>0$, which satisfies the following condition: For every $x,y\in\supp\mu$, $x\neq y$, there exist $N\in\N$ and a sequence $(z_i)_{i=0}^{N+1}\subset\supp\mu$ such that $z_0=x$, $z_{N+1}=y$ and
\begin{equation}\label{eq:eisyklir}
\sum_{i=0}^N h\big(\sfd(z_i,z_{i+1})+\rho(\sfd(x,y))\big) \;<\; h(\sfd(x,y)).
\end{equation}
\end{lemma}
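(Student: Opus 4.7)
The plan is to combine the pointwise existence of chains from condition (1) with a continuity-and-compactness argument in order to pass to a uniform perturbation radius. For each $s>0$ I consider the compact set $A_s := \{(x,y)\in\supp\mu\times\supp\mu : \sfd(x,y)\geq s\}$ and aim to produce $r_s>0$ such that every pair $(x,y)\in A_s$ admits a chain in $\supp\mu$ satisfying \eqref{eq:eisyklir} with $r_s$ in place of $\rho(\sfd(x,y))$.

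The key local step is a perturbation argument. Given $(x,y)\in A_s$, condition (1) provides a chain $z_0=x, z_1,\ldots, z_N, z_{N+1}=y$ in $\supp\mu$ with positive slack $\eta := h(\sfd(x,y))-\sum_{i=0}^{N} h(\sfd(z_i,z_{i+1})) > 0$. I then freeze the interior nodes $z_1,\ldots,z_N$ and allow only the endpoints to vary: for $(x',y')$ in a neighborhood of $(x,y)$ in $A_s$, the chain $x', z_1,\ldots, z_N, y'$ still lies in $\supp\mu$, and only the two boundary distances $\sfd(x',z_1)$ and $\sfd(z_N,y')$ change, while the $N-1$ interior distances remain fixed. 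By continuity of $h$ and $\sfd$, the map $(x',y',r)\mapsto h(\sfd(x',y'))-\sum_{i} h(\sfd(z'_i,z'_{i+1})+r)$ is continuous at $(x,y,0)$, where it takes the positive value $\eta$, so there exist a neighborhood $U_{x,y}\subset A_s$ of $(x,y)$ and a radius $r_{x,y}>0$ on which this map remains positive.

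Compactness of $A_s$ then yields a finite subcover $U_{x_j,y_j}$, and $r_s := \min_j r_{x_j,y_j}>0$ is uniformly admissible on $A_s$. Since $A_{s_1}\supset A_{s_2}$ whenever $s_1 \leq s_2$, the collection of uniformly admissible radii grows with $s$; letting $R(s)$ denote its supremum, $R$ is a positive nondecreasing function of $s$, and setting $\rho(s) := \min(R(s),1)/2$ gives a nondecreasing function with $\rho(s)>0$ for all $s>0$. For any $x \ne y$ in $\supp\mu$, with $s := \sfd(x,y)$, the definition of $R(s)$ yields some admissible $r' \in (\rho(s), R(s))$, and monotonicity of $h$ transfers the inequality from $r'$ down to $\rho(s)$, proving \eqref{eq:eisyklir}. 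The main technical difficulty lies in the local perturbation step, which succeeds precisely because the chain and its length $N$ are held fixed on each neighborhood, so that only the two boundary distances move continuously and no uniformity in $N$ across $A_s$ is required.
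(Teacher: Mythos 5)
Your proof is correct and follows essentially the same strategy as the paper: fix a lower bound $s$ on $\sfd(x,y)$, use condition (1) to get a chain with positive slack at each point of the compact set $A_s$, perturb to get a uniform positive radius by compactness, and then glue the scales $s$ together monotonically. The one genuine (and slightly cleaner) difference is that you handle the perturbation of the endpoints and of the radius $r$ in a single joint continuity argument for the map $(x',y',r) \mapsto h(\sfd(x',y'))-\sum_i h(\sfd(z_i',z_{i+1}')+r)$, whereas the paper splits this into two steps: first shrink the neighborhood of $(x,y)$ so that half the slack $\varepsilon_{(x,y)}/2$ survives at $r=0$, then use uniform continuity of $h$ on a bounded interval together with the bound $N := \max_j N_j$ on chain lengths to choose a single $\delta$ with $h(s+\delta)<h(s)+\varepsilon/(N+1)$. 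Your version avoids tracking $N$ and the uniform continuity of $h$ explicitly, since finitely many terms and joint continuity already do the work; the paper's version makes the quantitative mechanism (slack budget divided among chain links) more visible. Your construction of $\rho$ via $\rho(s):=\min(R(s),1)/2$ with $R(s)$ the supremum of admissible radii is also a clean alternative to the paper's recursive definition along a dyadic sequence $R_k$, and it correctly produces a nondecreasing positive function; the only small point worth making explicit is that the set of admissible radii on $A_s$ is a down-closed interval (because $h$ is nondecreasing), which is what makes $\rho(s)<R(s)$ automatically admissible.
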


\begin{proof}
Fix $0<R\leq \diam(\supp\mu)$. Define the set 
\[
A_R:=\{(x,y)\in\supp\mu\times\supp\mu: \sfd(x,y)\geq R\}.
\]
Since $\supp\mu$ is compact, the set $A_R$ is compact as well. In addition, condition \eqref{eq:eisykli} states that for every $(x,y)\in A_R$ there exist $N_{(x,y)}\in\N$ and a sequence $(z_i^{(x,y)})_{i=1}^{N_{(x,y)}}\subset\supp\mu$ such that
\[
\varepsilon_{(x,y)}:=h(\sfd(x,y))-\big(h(\sfd(x,z_1^{(x,y)}))+\sum_{i=1}^{N_{(x,y)}-1}h(\sfd(z_i^{(x,y)},z_{i+1}^{(x,y)}))+h(\sfd(z_{N_{(x,y)}}^{(x,y)},y))\big)>0.
\]
Now, given $(x,y)\in A_R$ we can use the continuity of $h$ to get a radius $r_{(x,y)}>0$ such that for any $(x',y')\in B(x,r_{(x,y)})\times B(y,r_{(x,y)})$ we still have
\begin{equation}\label{eq:peiteey}
h(\sfd(x',y'))-\big(h(\sfd(x',z_1^{(x,y)}))+\sum_{i=1}^{N_{(x,y)}-1}h(\sfd(z_i^{(x,y)},z_{i+1}^{(x,y)}))+h(\sfd(z_{N_{(x,y)}}^{(x,y)},y'))\big)>\frac{\varepsilon_{(x,y)}}{2}.
\end{equation}
We can cover $A_R$ with open sets $U_{(x,y)}:=B(x,r_{(x,y)})\times B(y,r_{(x,y)})$, $(x,y)\in A_R$, 
and thanks to compactness of $A_R$ a finite number of these sets is enough to cover $A_R$.
Let us denote the sets in this finite cover by $U_j$, $j \in \{1,\ldots, M\}$. We also denote by $\varepsilon_j$ the $\varepsilon_{(x,y)}$ corresponding to $U_j$, and similarly we denote $N_j$ and $(z_i^j)_{i=1}^{N_j}$.

Define 
\[
\varepsilon:=\min_{1\leq j \leq M}\frac{\varepsilon_j}{2} \qquad \textrm{and}\qquad N:=\max_{1\leq j\leq M} N_j.
\]
Then, since $h$ is uniformly continuous on the interval $[0,2\diam(\supp\mu)]$, there exists $\delta>0$ such that 
\begin{equation}\label{eq:tasjva}
h(s+\delta)<h(s)+\frac{\epsilon}{N+1}  \quad \textrm{for any } s\in [0,\diam(\supp\mu)].
\end{equation}

Next we prove that if $(x,y)\in A_R$, then \eqref{eq:eisyklir} holds with $\rho(\sfd(x,y))$ replaced by $\delta$. So fix $(x,y)\in A_R$. Then we have $(x,y)\in U_j$ for some $j$. We take $(z_i^j)_{i=1}^{N_j}$ as the sequence required in \eqref{eq:eisyklir}. Using \eqref{eq:tasjva} and then \eqref{eq:peiteey} we obtain
\begin{align*}
 h\big(\sfd(x,z_1^j)+\delta\big) & +\sum_{i=1}^{N_j-1}h\big(\sfd(z_i^j,z_{i+1}^j)+\delta\big)+h\big(\sfd(z_{N_j}^j,y)+\delta\big) \\
<\; & h\big(\sfd(x,z_1^j)\big)+\frac{\varepsilon}{N+1} +\sum_{i=1}^{N_j-1}\Big( h\big(\sfd(z_i^j,z_{i+1}^j)\big)+\frac{\epsilon}{N+1}\Big)+h\big(\sfd(z_{N_j}^j,y)\big)+\frac{\epsilon}{N+1} \\
\leq\; & h\big(\sfd(x,z_1^j)\big)+\sum_{i=1}^{N_j-1}h\big(\sfd(z_i^j,z_{i+1}^j)\big)+h\big(\sfd(z_{N_j}^j,y)\big)+\varepsilon \\
<\; & h(\sfd(x,y))-\frac{\varepsilon_j}{2}+\varepsilon \;\leq\; h(\sfd(x,y)),
\end{align*}
which means that \eqref{eq:eisyklir} holds with $\delta$.

Now we could simply define $\rho(R):=\delta$, but to make sure $\rho$ is nondecreasing we will do the following: Take a decreasing sequence $(R_k)_{k=1}^\infty$, $R_k\to 0$ (for example $R_k=2^{-k}\diam(\supp\mu)$). Then we use the above argument for $R=R_k$ to obtain $\delta_k$. With these we can define $\rho$: First we set $\rho(t):=\delta_1$ for $t\geq R_1$ and then we define $\rho$ for intervals $[R_{k+1},R_k)$ recursively by setting $\rho(t):=\min\{\delta_{k+1},\rho(R_k)\}$ for $t\in[R_{k+1},R_k)$.

This definition ensures that $\rho$ is nondecreasing and strictly positive in $(0,\infty)$. Additionally, if $x,y\in\supp\mu$, $x\neq y$, then $(x,y)\in A_{R_k}$ for some $k\in\N$ and the above argument shows that \eqref{eq:eisyklir} holds with $\delta_k$. Since $h$ is nondecreasing and $\rho(\sfd(x,y))\leq \delta_k$, we see that \eqref{eq:eisyklir} holds with $\rho(\sfd(x,y))$ as well.
\end{proof}

In the next lemma we see that the condition (2) of Theorem \ref{thm:planestimate} quite easily implies the condition (1) for almost every
pair of points $(x,y)$. In the proof of Theorem \ref{thm:planestimate} we will then improve this to hold for every pair of points.

\begin{lemma}\label{lma:almost1}
 Suppose that $(X,\sfd)$, $h$ and $\mu$ are such that (2) of Theorem \ref{thm:planestimate} holds.
 Then %the set $\supp \mu$ is compact and connected, and 
 for every $x \in \supp\mu$ we have that
 for $\mu$-almost every $y \in \supp\mu$ there exists $(z_i)_{i=1}^N\subset\supp\mu$ such that \eqref{eq:eisykli} holds.
\end{lemma}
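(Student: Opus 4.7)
I would proceed by contradiction, assuming there exist $x \in \supp\mu$ and a Borel set $E \subset \supp\mu$ with $\mu(E) > 0$ such that \eqref{eq:eisykli} fails for every $y \in E$. The key auxiliary object is the ``chain cost''
\[
h^*_x(y) := \inf\Bigl\{\sum_{i=0}^N h(\sfd(z_i, z_{i+1})) : (z_i)_{i=0}^{N+1}\subset\supp\mu,\ z_0 = x,\ z_{N+1} = y\Bigr\},
\]
which satisfies $h^*_x(y) \le h(\sfd(x, y))$ and, via concatenation of chains, the triangle-type inequality $|h^*_x(y) - h^*_x(y')| \le h(\sfd(y, y'))$. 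Failure of \eqref{eq:eisykli} on $E$ forces the equality $h^*_x(y) = h(\sfd(x, y))$ for every $y \in E$.

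First I would extract a point $y_0 \in E \setminus \{x\}$ with $p_r := \mu(E \cap B(y_0, r)) > 0$ for every $r > 0$, via a standard covering argument: if every $y \in E$ had some $r_y$ with $\mu(E \cap B(y, r_y)) = 0$, then covering $\{y \in E : r_y \ge 1/n\}$ by countably many balls of radius $1/n$ would force $\mu(E) = 0$. Then, for $r \in (0, \sfd(x, y_0)/3)$ and $t \in (0, p_r)$, setting $A_r := E \cap B(y_0, r)$, I would consider the perturbation
\[
\nu_{r, t} := \mu - \tfrac{t}{p_r}\mu\restr{A_r} + t\,\delta_x \in \prob{\supp\mu}
\]
together with the ``direct'' plan
\[
\lambda_{r, t} := (\id, \id)_\sharp\bigl(\mu - \tfrac{t}{p_r}\mu\restr{A_r}\bigr) + \tfrac{t}{p_r}\mu\restr{A_r} \times \delta_x \in \Pi(\mu, \nu_{r, t}),
\]
whose $\lambda_{r, t}$-essential supremum of $\sfd$ is at least $\sfd(x, y_0) - r > 0$ and whose cost is at most $t\, h(\sfd(x, y_0) + r)$ (under the mild convention $h(0) = 0$, which we adopt to keep bookkeeping clean). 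The decisive step is to verify that $\lambda_{r, t}$ is in fact optimal for $C_h$; this I would do through Kantorovich duality with the admissible pair $\varphi := h^*_x$, $\psi := -h^*_x$. A direct calculation using $\nu_{r, t} - \mu = -\tfrac{t}{p_r}\mu\restr{A_r} + t\,\delta_x$ and $h^*_x(x) = 0$ reduces the dual value to $\tfrac{t}{p_r}\int_{A_r} h^*_x\, d\mu$, and the forced equality $h^*_x = h\circ\sfd(x, \cdot)$ on $A_r$ makes it coincide exactly with $C_h(\lambda_{r, t})$.

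Applying condition (2) to the optimal plan $\lambda_{r, t}$ then yields
\[
\omega\bigl(\sfd(x, y_0) - r\bigr) \;\le\; C_h(\lambda_{r, t}) \;\le\; t\, h(\sfd(x, y_0) + r).
\]
With $r$ fixed, the left side is a positive constant while the right side vanishes as $t \to 0^+$, the desired contradiction. The main obstacle is the optimality claim in the previous paragraph: the crucial cancellation in the dual calculation hinges precisely on $h^*_x(y) = h(\sfd(x, y))$ for $y \in A_r$, which is exactly the hypothesized failure of \eqref{eq:eisykli}; the chain-cost potential is tailor-made for the problem, and everything else is bookkeeping.
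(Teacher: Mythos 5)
Your proof is correct, but it proves optimality of the perturbed plan by a genuinely different mechanism than the paper does. The paper defines $\nu_t$ by pushing mass from the entire bad set $A$ onto the single point $y$ and checks that the resulting plan is $h\circ\sfd$-cyclically monotone, decomposing an arbitrary permutation into orbit segments and invoking Theorem~\ref{thm:optimaliscyclical} (cyclical monotonicity $\Rightarrow$ optimality). You instead build the chain-cost potential $h^*_x$ and verify optimality via the trivial direction of Kantorovich duality: admissibility of $(\varphi,\psi)=(h^*_x,-h^*_x)$ is a one-line concatenation argument, and equality of the dual value with $C_h(\lambda_{r,t})$ is forced precisely by $h^*_x=h\circ\sfd(x,\cdot)$ on $E$. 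This buys you something: you only need weak duality (integrate $\varphi\oplus\psi\le c$ against $\lambda$), not the deeper ``monotone $\Rightarrow$ optimal'' theorem, and you avoid the permutation combinatorics entirely. Your other deviation---localizing the perturbation to a small ball $A_r=E\cap B(y_0,r)$ around a density point rather than using all of $A$---is cosmetic but makes the lower bound on $\lambda_{r,t}$-$\esup \sfd$ automatic.

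Two technical remarks, both of which apply equally to the paper's own proof. First, you correctly flag that $h(0)=0$ must be in force (otherwise $C_h(\lambda_{r,t})\not\to0$); the paper uses the same convention tacitly in writing $\int h\circ\sfd\,\d\lambda_t = \tfrac{t}{\mu(A)}\int_A h(\sfd(\cdot,y))\,\d\mu$. Second, your step extracting $y_0\in E\setminus\{x\}$ with $\mu(E\cap B(y_0,r))>0$ for all $r$ tacitly requires $\mu(E\setminus\{x\})>0$; since \eqref{eq:eisykli} always fails at the pair $(x,x)$, this fails precisely when $x$ is an atom of $\mu$ carrying all of $\mu(E)$. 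The paper's proof has the identical gap (its assertion $\lambda_t\text{-}\esup\sfd>0$ fails when $A=\{y\}$ has positive mass), so this should be read as a small imprecision in the lemma's statement rather than in your argument.
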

\begin{proof}
 Suppose that the claim is false.
%  $\supp\mu$ is compact and connected. 
 Then there exists $y \in \supp\mu$ such that 
\[
 A:= \{x \in \supp\mu \,:\, \text{\eqref{eq:eisykli} fails for all }(z_i)_{i=1}^N \subset \supp\mu\}
\]
has positive $\mu$-measure.
For all $0 < t < \mu(A)$ consider the measure
\[
 \nu_t := \mu\restr{X \setminus A} + \frac{\mu(A)-t}{\mu(A)}\mu\restr{A} + t \delta_y
\]
and the transport $\lambda_t \in \Pi(\mu,\nu_t)$ defined as
\[
 \lambda_t := (\id,\id)_\sharp\big((1-\frac{t}{\mu(A)}\chi_A)\mu\big) + \frac{t}{\mu(A)}(\mu\restr{A}\times\delta_y).
\]
Then
\begin{equation}\label{eq:lambdasupport}
 \supp\lambda_t = \{(x,x)\,:\,y \in \supp\mu\} \cup (A \times \{y\}).
\end{equation}
In order to see that $\lambda_t$ is optimal, we check that $\supp\lambda_t$ is $h\circ\sfd$-cyclically monotone.
To observe that this is the case, take $(x_i,y_i)_{i=1}^M \subset \supp \lambda_t$ and let $\sigma$ be a permutation of $\{1,\dots,M\}$.
By \eqref{eq:lambdasupport}, either $x_i = y_i$ or $y_i = y$ for all $i$. Write $I = \{i \,:\, x_i \ne y_i \}$.
For each $i \in I$ define a finite sequence $(i_j)_{j=0}^{N_i}$ by setting $i_0 = i$, $i_1 = \sigma(i)$,
and inductively $i_j = \sigma(i_{j-1})$ if $\sigma(i_{j-1})\notin I$.
Now by the definition of $A$ we get
\[
 \sum_{i=1}^M h\circ\sfd(x_i,y_i) = \sum_{i\in I} h\circ\sfd(x_i,y_i)
 \le \sum_{i\in I} \sum_{j=0}^{N_i}h\circ\sfd(x_{i_j},y_{\sigma(i_j)})
 \le \sum_{i=1}^Mh\circ\sfd(x_{i},y_{\sigma(i)}).
\]
Thus $\supp\lambda_t$ is $h\circ\sfd$-cyclically monotone.
%{\color{red}[Write more details to the above proof of cyclical monotonicity?]}

To see that (2) fails notice that 
\[
 \lambda_t-\esup \sfd = \mu-\esup_{y\in A}\sfd(x,y) > 0,
\]
but
\[
\int h\circ \sfd\,\d\lambda_t = \frac{t}{\mu(A)}\int_A h(\sfd(x,y))\,\d\mu(y) \to 0, \qquad \text{as }t\to 0.
\] 
\end{proof}

Now we have all the tools to prove Theorem \ref{thm:planestimate}.

\begin{proof}[Proof of Theorem \ref{thm:planestimate}]
%First assume that $\supp\mu$ is compact and there exists $\rho$ such that \eqref{eq:eisykli} holds. %Fix $\nu\in\prob{\supp\mu}$ and let $\lambda\in\Pi(\mu,\nu)$ be an optimal plan in the minimization of %$\int h \circ \sfd \,\d\lambda$. As an optimal plan $\lambda$ is $h\circ \sfd$-cyclically monotone. In %particular, given $(x,y)\in\supp\lambda$ we have for every $(z,w)\in\supp\mu$ the inequality %$h(\sfd(z,w))\geq h(\delta_h(x,y,z))$. Thus we get
%\begin{align*}
%\int h\big(\sfd(z,w)\big)\,\d\lambda(z,w) \geq \int h\big(\delta_h(x,y,z)\big)\,\d\lambda(z,w) 
%= \int h\big(\delta_h(x,y,z)\big)\,\d\mu(z).
%\end{align*}
%Now we can apply Lemma \ref{lma:alaraja} to get
%\begin{align*}
%& \int h\big(\delta_h(x,y,z)\big)\,\d\mu(z) \\ %\geq \alpha \int_X h(\delta_{(x,y)}(z))\,dm(z) \\
%\geq & \int_{B(z_0,\rho(d(x,y))} h\big(\rho(d(x,y))-d(z_0,z)\big) \,\d\mu(z) \\
%\geq & h\big(\rho(\sfd(x,y))/2\big) \, \mu\big( B(z_0,\rho(\sfd(x,y))/2 \big) \\
%\geq & h\big(\rho(\sfd(x,y))/2\big) \, m\big(\rho(\sfd(x,y))/2\big).
%\end{align*}
%Combining the estimates we obtain
%\[
%\int h\big(\sfd(z,w)\big)\,\d\lambda(z,w) \geq h\big(\rho(\sfd(x,y))/2\big) \, %m\big(\rho(\sfd(x,y))/2\big)
%\]
%and the claim follows by taking supremum over $(x,y)\in\supp\lambda$.
%\end{proof}
Let us first show that (1) implies (2).
Thus we assume that $\mu\in\prob{X}$ has compact support and satisfies condition \eqref{eq:eisykli}. To prove the claim we fix $\nu\in\prob{\supp\mu}$ and an $h\circ\sfd$-optimal plan $\lambda\in\Pi(\mu,\nu)$. For simplicity denote $D:=\lambda-\esup \sfd$. We may assume $D>0$.

The key part of the proof is to show that
\begin{equation}\label{eq:larvio}
\lambda\big( \{(x,y)\in \supp\mu\times\supp\mu: \sfd(x,y)\geq \frac{\rho(D)}{4} \} \big) \geq m(\frac{\rho(D)}{4}).
\end{equation}
This implies the inequality
\[
\int h\circ\sfd \,\d\lambda \geq h(\frac{\rho(D)}{4})\,m(\frac{\rho(D)}{4}),
\]
which is exactly what we want to prove.

So, let us prove \eqref{eq:larvio}. We do this by contradiction, i.e. we assume that
\begin{equation}\label{eq:eilarvio}
\lambda\big( \{(x,y)\in \supp\mu\times\supp\mu: \sfd(x,y)\geq \frac{\rho(D)}{4} \} \big) < m(\frac{\rho(D)}{4}).
\end{equation}
Since $\supp\lambda$ is compact, we find $(x,y)\in\supp\lambda$ such that $\sfd(x,y)=D$. Then we apply Lemma \ref{lma:rhoexists} and find a sequence $(z_i)_{i=0}^{N+1}\subset\supp\mu$ such that $z_0=x$, $z_{N+1}=y$ and
\begin{equation}\label{eq:rho}
\sum_{i=0}^N h\big(\sfd(z_i,z_{i+1})+\rho(D)\big) \;<\; h(\sfd(x,y)).
\end{equation}

Next we can use an argument similar to the one used in Lemma \ref{lma:bigpartlongway} and show that \eqref{eq:eilarvio} implies that for every $i \in \{1,\ldots, N\}$ we have
\begin{align*}
\lambda\big( B(z_i,\frac{\rho(D)}{2})\times B(z_i,\frac{\rho(D)}{2}) \big) & \;\geq \;  \mu(B(z_i,\frac{\rho(D)}{4}))-\lambda\big( B(z_i,\frac{\rho(D)}{4})\times (X\setminus B(z_i,\frac{\rho(D)}{2}))\big) \\
& \;>\; m(\frac{\rho(D)}{4})-m(\frac{\rho(D)}{4}) \;=\; 0.
\end{align*}
Thus for every $i\in\{1,\ldots, N\}$ there exists $(x_i,y_i)\in\supp\lambda \cap \big(B(z_i,\frac{\rho(D)}{2})\times B(z_i,\frac{\rho(D)}{2})\big)$.

Finally, since $x_i,y_i\in B(z_i,\frac{\rho(D)}{2})$ and \eqref{eq:rho} holds, we have
\begin{align*}
 h(\sfd(x,y_1))&+\sum_{i=1}^{N-1}h(\sfd(x_i,y_{i+1}))+h(\sfd(x_N,y)) \\
\leq \; & h\big(\sfd(x,z_1)+\rho(D)\big)+\sum_{i=1}^{N-1}h\big(\sfd(z_i,z_{i+1})+\rho(D)\big)+h\big(\sfd(z_{N},y)+\rho(D)\big) \\
< \; & h(\sfd(x,y)) \;\leq\; h(\sfd(x,y))+\sum_{i=1}^Nh(\sfd(x_i,y_i)).
\end{align*}
This is a contradiction, since $(x_1,y_1),\ldots,(x_N,y_N),(x,y)\in\supp\lambda$ and Theorem \ref{thm:optimaliscyclical} implies that $\lambda$ is $h\circ\sfd$-cyclically monotone.
\medskip

Let us then show that (2) implies (1).
First note that compactness of $\supp\mu$ follow from (2) by Theorem \ref{thm:main}.
Suppose that (1) does not hold. Then there exists a pair of points $x,y\in\supp\mu$, $x\neq y$, such that 
\[
\inf_{z\in\supp\mu}f_{x,y}(z) = h(\sfd(x,y)), 
\]
where 
\[
f_{x',y'}(z_1):= \inf\bigg\{ \sum_{j=0}^N h(\sfd(z_j,z_{j+1})): z_0=x',z_{N+1}=y', (z_j)_{j=2}^N\subset\supp\mu, N\in\mathbb{N} \bigg\}.
\]
Define $E:=\big\{ z\in\supp\mu: f_{x,y}(z) = h(\sfd(x,y))\big\}$, and for every $k \in \N$,
\[
F_k:=\Big\{ z\in\supp\mu: f_{x,y}(z) \ge h(\sfd(x,y))+\frac{1}{k} \Big\} \quad\textrm{and} \quad E_k:=\supp\mu\setminus F_k.
\]
Due to the uniform continuity of $h$ in $[0,\diam(\supp\mu)]$ the function $f_{x,y}$ is continuous as a function of $z$.
Thus the sets $F_k$ and $E$ are compact.
Also we see that $x\in E\subset E_k$, and so we get $\dist(x,F_k)>0$ for all $k \in \N$ by compactness. Consequently we also have $\mu(E_k) > 0$ for all $k \in \N$.

We claim that $\mu(E)=0$. This can be proven by contradiction: For every $z_1\in E$ we have by definition $f_{x,y}(z_1)=h(\sfd(x,y))$, which implies
\[
h(\sfd(x,z_1))=\inf\bigg\{ \sum_{j=0}^N h(\sfd(w_j,w_{j+1})): w_0=x, w_{N+1}=z_1, (w_j)_{j=1}^N\subset\supp\mu, N\in\mathbb{N} \bigg\}.
\]
If $\mu(E)>0$, this yields a contradiction with Lemma \ref{lma:almost1}.

Take $\nu_k=\mu\restr{F_k}+\mu(E_k)\delta_y$ and let $\lambda_k\in\Pi(\mu,\nu_k)$ be an $h\circ \sfd$-optimal plan. Then
\[
\int h\circ \sfd \,\d\lambda_n \le h(\diam(\supp\mu))\mu(E_k) \to 0, \quad\textrm{as }\, k\to\infty,
\]
since $\lim_k \mu(E_k)=\mu(E)=0$. Now we prove that $\lambda_k-\esup \sfd \ge {\sfd(x,y)}/{2}$, causing a contradiction with condition (2).

We begin by defining the following sequence of sets: First we define $A_0:=\{y\}$ and then recursively
\[
A_{j+1}:=\big\{z\in \supp\mu: (z,w)\in\supp\lambda_k \textrm{ for some } w\in A_j\big\} \quad\textrm{and}\quad A:=\bigcup_{j=0}^\infty A_j.
\]
These definitions ensure that $\lambda_k(A_{j+1}\times A_j)=\lambda_k(X\times A_j)$, and hence we get
\begin{align*}
\mu(E_k\cap A)+ & \mu(F_k\cap A) = \mu(A) = \lambda_t\big(\bigcup_{j=0}^\infty (A_j\times X)\big) \ge \lambda_t\big(\bigcup_{j=0}^\infty (A_{j+1}\times A_j)\big) \\
 = & \lambda_t\big(\bigcup_{j=0}^\infty (X\times A_j)\big) = \nu_t(A) = \mu(F_k\cap A)+\mu(E_k).
\end{align*}
This implies that $\mu(E_k\setminus A)=0$.

Now we note that the function
\[
g(x')=\inf_{z\in F_k}f_{x',y}(z)-h(\sfd(x',y)),
\]
is continuous and $g(x) \ge 1/k$. Thus there exists $0<r<\sfd(x,y)/2$ such that 
\begin{equation}\label{eq:zeta1}
\inf_{z\in F_k}f_{x',y}(z)-h(\sfd(x',y))=g(x')>0 \quad \textrm{whenever}\; x'\in B(x,r).
\end{equation}
Furthermore we may choose $r>0$ above to be smaller than $\dist(x,F_k)$, so that $B(x,r)\cap\supp\mu\subset E_k$.

Finally, since $\mu(B(x,r))>0$, $B(x,r)\cap\supp\mu\subset E_k$ and $\mu(E_k\setminus A)=0$, there exists $w_{N+1}\in B(x,r)\cap A_{N+1}$ for some $N$. Furthermore, using the definitions of the sets $A_j$ we find a sequence $(w_j)_{j=0}^{N+1}$ in $X$
such that $(w_{j+1},w_j)\in \supp\lambda_k \cap A_{j+1}\times A_j$ for all $j \in \{0,1, \ldots,N\}$. If we reverse the order of these points by defining $z_j=w_{N+1-j}$, we have $z_0\in B(x,r)$, $z_{N+1}=y$ and $(z_j)_{j=1}^N\subset\supp\nu_k$. Now, if $z_1\in F_k$, then it follows from \eqref{eq:zeta1} that $f_{z_0,y}(z_1)-h(\sfd(z_0,y)) \ge g(z_0)>0$. This means that in particular
\[
\sum_{j=0}^N h(\sfd(z_j,z_{j+1})) > h(\sfd(z_0,z_{N+1})).
\]
This is a contradiction, since $\lambda_k$ is $h\circ\sfd$-cyclically monotone and $(z_j,z_{j+1})\in\supp\lambda_k$. 
Thus we have $z_1\in\supp\nu_k\setminus F_k=\{y\}$ and $\lambda_k-\esup \sfd \ge \sfd(z_0,z_1) \ge {\sfd(x,y)}/2$.
\end{proof}

Let us now provide a criterion for the cost function $h$ and the measure $\mu$ which guarantee
that the condition (1) of Theorem \ref{thm:planestimate} holds.
Here $\mathcal{H}^s$ denotes the $s$-dimensional Hausdorff measure.

\begin{proposition}\label{prop:nicelyconnectedsupport}
 Let $1 \le  s < \infty$. Suppose that $h \colon [0,\infty) \to [0,\infty)$ satisfies
 \[
  \lim_{t \downarrow 0}\frac{h(t)}{t^s} = 0
 \]
 and $\mu \in \prob{X}$ is such that for every $x,y \in \supp\mu$ there exists a curve $\gamma \subset \supp\mu$
 connecting $x$ to $y$ with $\mathcal{H}^s(\gamma) < \infty$. Then condition (1) of Theorem \ref{thm:planestimate} holds.
 
 In particular, condition (1) of Theorem \ref{thm:planestimate} holds if $h(0) = h'(0)=0$ and if any $x,y \in \supp\mu$
 can be connected by a rectifiable curve in $\supp\mu$.
\end{proposition}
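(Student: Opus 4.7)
The plan is to cover $\gamma$ by small balls centered on $\gamma$, with control on the $s$-th power of the radii coming from $\mathcal{H}^s(\gamma)$, and then chain from $x$ to $y$ along a \emph{simple} path in the nerve of the cover.

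Fix $x,y\in\supp\mu$ with $x\ne y$ and let $\gamma\subset\supp\mu$ be the connecting curve with $\mathcal{H}^s(\gamma)<\infty$. Given $\delta,\eta>0$, the definition of $\mathcal{H}^s$ together with compactness of $\gamma$ yields a finite cover $\{A_j\}_{j=1}^M$ of $\gamma$ with $\diam(A_j)\le\delta$ and $\sum_j\diam(A_j)^s\le\mathcal{H}^s(\gamma)+\eta$. Discard the $A_j$ missing $\gamma$, pick $p_j\in A_j\cap\gamma\subset\supp\mu$, set $r_j=\diam(A_j)$, and replace $A_j$ by the ball $B_j:=B(p_j,r_j)\supset A_j$; the new cover $\{B_j\}$ of $\gamma$ still satisfies $\sum_j r_j^s\le\mathcal{H}^s(\gamma)+\eta$ and now has centers in $\supp\mu$.

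Consider the nerve graph $G$ on $\{1,\dots,M\}$ defined by $j\sim k$ iff $B_j\cap B_k\ne\emptyset$. The connected components of $G$ decompose $\bigcup_j B_j$ into pairwise disjoint open sets, so the connectedness of $\gamma$ forces $G$ to be connected. Picking $j_0,j_f$ with $x\in B_{j_0}$ and $y\in B_{j_f}$, choose a \emph{simple} path $j_0=k_0,k_1,\dots,k_L=j_f$ in $G$ and define $z_0=x$, $z_l=p_{k_{l-1}}$ for $l=1,\dots,L+1$, $z_{L+2}=y$. The simplicity of the path is the key point: it ensures that each $r_{k_l}$ enters the sum below at most twice.

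The triangle inequality together with $B_{k_{l-1}}\cap B_{k_l}\ne\emptyset$ gives $\sfd(z_i,z_{i+1})\le r_{k_{i-1}}+r_{k_i}\le 2\delta$ in the interior and $\sfd(z_0,z_1)<r_{k_0}$, $\sfd(z_{L+1},z_{L+2})<r_{k_L}$ at the ends. The hypothesis $h(t)/t^s\to 0$ provides, for any $\eps>0$, a $\delta>0$ such that $h(t)\le\eps t^s$ for all $t\le 2\delta$; combining this with $(a+b)^s\le 2^{s-1}(a^s+b^s)$ (valid for $s\ge 1$) yields
\[
\sum_{i=0}^{N} h(\sfd(z_i,z_{i+1})) \;\le\; C_s\,\eps\sum_{l=0}^{L}r_{k_l}^s \;\le\; C_s\,\eps\,(\mathcal{H}^s(\gamma)+\eta),
\]
with $C_s$ depending only on $s$. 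Since $h(\sfd(x,y))>0$ is fixed, choosing first $\eta$ and then $\delta$ (hence $\eps$) small enough produces the strict inequality required in condition~(1) of Theorem~\ref{thm:planestimate}. The main obstacle is precisely the multiplicity control in the sum; passing to a simple path in the nerve graph is what circumvents it. The ``in particular'' clause is the case $s=1$: $h(0)=h'(0)=0$ means $h(t)/t\to 0$, and a rectifiable curve has length equal to its $\mathcal{H}^1$-measure.
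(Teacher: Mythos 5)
Your argument is correct and is essentially the paper's proof: both cover $\gamma$ by a finite family of small sets whose $s$-th power diameters are summable by $\mathcal{H}^s(\gamma)$, extract a chain of pairwise distinct, consecutively intersecting sets from $x$ to $y$, and bound each link via $h(t)\le\eps t^s$. The paper does not name the nerve graph or invoke simplicity of the path explicitly, but its requirement that the indices $i_j$ in the chain be pairwise distinct is exactly your simple-path condition, so the two arguments coincide up to the cosmetic choice of balls centered on $\gamma$ versus relatively open subsets of $\supp\mu$.
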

\begin{proof}
 Let $x,y \in \supp\mu$, $x \ne y$, and let $\gamma \subset \supp\mu$ be a curve connecting $x$ to $y$ with $\mathcal{H}^s(\gamma) < \infty$.
 Define 
 \[
  \varepsilon := 2^{-2-s}\frac{h(\sfd(x,y))}{\mathcal{H}^s(\gamma)}.
 \]
 Let $\delta > 0$ be small enough so that $h(t) < \varepsilon t^s$ for all $0 < t < \delta$.
 Since $\mathcal{H}^s(\gamma) < \infty$ and $\gamma$ is compact, there exists a finite 
 collection of relatively open sets $E_i \subset \supp\mu$, $i \in \{1,\dots,N\}$, with 
 \[
\diam(E_i) < \frac{\delta}2 \text{ for all }i\quad,\quad \gamma \subset \bigcup_{i=1}^N E_i\quad\text{ and }\quad\sum_{i=1}^N \diam(E_i)^s < 2 \mathcal{H}^s(\gamma).  
 \]
 Since $\gamma$ is connected there exists a sequence $\{i_j\}_{j=1}^K$ such that $x \in E_{i_1}$, $y \in E_{i_K}$,
 $i_j \ne i_{j'}$ if $j \ne j'$ and $E_{i_j} \cap E_{i_{j+1}} \ne \emptyset$ for all $j \in \{1,\dots,K-1\}$.
 Define $z_1 = x$ and $z_K = y$. For each $j \in \{2,\dots,K-1\}$ select a point $z_j \in E_{i_j}$.
 Now for all $j \in \{1,\dots,K-1\}$ we have
 \[
  \sfd(z_j,z_{j+1}) \le \diam(E_{i_j}) + \diam(E_{i_{j+1}}) < \frac{\delta}2 + \frac{\delta}2 = \delta.
 \]
 Therefore
 \begin{align*}
   \sum_{j=1}^Kh(\sfd(z_j,z_{j+1})) & \le \sum_{j=1}^K \varepsilon \sfd^s(z_j,z_{j+1})
  \le \sum_{j=1}^K \varepsilon \left((2\diam(E_{i_j}))^s + (2\diam(E_{i_{j+1}}))^s\right)\\
  & \le 2^{1+s}\varepsilon\sum_{i=1}^N \diam(E_i)^s < 2^{2+s}\varepsilon\mathcal{H}^s(\gamma) = h(\sfd(x,y))
 \end{align*}
 and so condition (1) of Theorem \ref{thm:planestimate} holds.
\end{proof}

%\section*{Acknowledgements}
%
%This research was started while the first author was taking part in the Junior Hausdorff Trimester %Program Optimal Transportation at the Hausdorff Research Institute for Mathematics in Bonn. The first %author would like to thank the Institute for gracious support throughout the program.

\end{document}